\renewcommand{\subsection}{\@startsection
{subsection}{2}{0mm}{\baselineskip}{-0.25cm}
{\normalfont\normalsize\em}}
\def\neg1{\text{\boldmath$1$}}
\def\neg1{\text{\boldmath$1$}}
\def\neg0{\bf {0}}
\def\cM{\mathcal M}
\def\ZZ{\mathbb{Z}}
\newcommand{\fx}{\mathbf{x}}
\newtheorem{theorem}{Theorem}[section]
\newtheorem{proposition}[theorem]{Proposition}
\newtheorem{corollary}[theorem]{Corollary}
\newtheorem{lemma}[theorem]{Lemma}
\theoremstyle{definition}
\newtheorem{definition}[theorem]{Definition}
\newtheorem{example}[theorem]{Example}
\newtheorem{conjecture}[theorem]{Conjecture}}
\theoremstyle{remark}
\newtheorem{remark}[theorem]{Remark}
\title[On some classes of generalized numerical semigroups]{On some classes of generalized numerical semigroups}
\author[C. Cisto]{Carmelo Cisto}
 \address{Università di Messina, Dipartimento di Scienze Matematiche e Informatiche, Scienze Fisiche e Scienze della Terra, Viale Ferdinando Stagno D’Alcontres 31, 98166 Messina, Italy}
  \email{carmelo.cisto@unime.it}
  \author[F. Navarra]{Francesco Navarra}
 \address{Università di Messina, Dipartimento di Scienze Matematiche e Informatiche, Scienze Fisiche e Scienze della Terra, Viale Ferdinando Stagno D’Alcontres 31, 98166 Messina, Italy}
  \email{francesco.navarra@unime.it}
\begin{document}

\keywords{generalized numerical semigroups (GNSs), generalized Wilf's conjecture, embedding dimension, type}
\subjclass[2020]{20M14, 11D07}

\begin{abstract}
 A generalized numerical semigroup is a submonoid of $\mathbb{N}^d$ with finite complement in it. In this work we study some properties of three different classes of generalized numerical semigroups. In particular, we prove that the first class satisfies a generalization of Wilf's conjecture, by introducing a generalization of a well-known sufficient condition for Wilf's conjecture in numerical semigroups, that involves the type of the semigroup. Partial results for Wilf's generalized conjecture are obtained also for the other two classes, and some open questions are provided.
 
\end{abstract}

\maketitle

\section{introduction}

Let $\mathbb{N}$ be the set of non negative integers and $d\geq 1$ be an integer. A monoid $S$ contained in $\mathbb{N}^d$ is called a \emph{generalized numerical semigroups} if the set $\mathbb{N}^d\setminus S$ is finite. This notion is introduced in \cite{failla2016algorithms} as a generalization of the well known definition of a numerical semigroup, that is a submonoid of $\mathbb{N}$ having finite complement in it. Numerical semigroups are studied in several papers and constitute an active area of research. For a collection of the fundamental concepts and the main results related to this matter, refer to the monographs \cite{numericalApplications, rosales2009numerical}. The introduction of generalized numerical semigroups leads to the natural goal of studying, in the new context, some concepts related to numerical semigroups. This is the intent of \cite{failla2016algorithms}, where the authors also provide some definitions and procedures obtained extending some arguments known for numerical semigroups. Successively, other papers on generalized numerical semigroups have appeared, introducing new properties and results. We will recall, along this paper, some of these properties and results that are useful for the purpose of this work. For other recent developments in such a matter see also \cite{bernardini2022corner,cisto2021algorithms,cisto2021almost}.
One of the main questions provided in \cite{failla2016algorithms} is to generalize a well known conjecture on numerical semigroups, called \emph{Wilf's conjecture}, introduced for the first time in \cite{wilf1978circle}. Although such a conjecture is proved to be true for many classes of numerical semigroups, it is still an open problem to prove it in its full generality. See \cite{delgado2019conjecture} for an exhaustive survey on this argument. Addressed to the previous question, a generalization of Wilf's conjecture for generalized numerical semigroups is introduced in \cite{cisto2020generalization}, referred as \emph{generalized Wilf's conjecure} (see also \cite{garcia2018extension} for a different extension of Wilf's conjecture in a more general context), and it is proved to be true for some particular classes of generalized numerical semigroups. So a natural direction of research is to investigate other classes of generalized numerical semigroups, studying their main properties and verifying the generalized Wilf's conjecture for them.\\ In such a direction in this paper we introduce and study some new classes of generalized numerical semigroups. In Section 2 we recall the generalized Wilf's conjecture and the arguments related to it, together with all useful concepts for the rest of this paper. In particular, a sufficient condition is introduced for a generalized numerical semigroup in order to verify the generalized Wilf's conjecture, involving a particular invariant called the \emph{type}. In Section 3, we introduce the concept of $T$-stripe generalized numerical semigroup, that is a semigroup in $\mathbb{N}^d$, $d\geq 2$, related to a fixed numerical semigroup $T$. We prove some properties of it depending on the fixed numerical semigroup $T$ and, using the sufficient condition introduced in Section 2, we prove that all generalized numerical semigroups of such a class satisfy the generalized Wilf's conjecture. Other two classes of generalized numerical semigroups are introduced in Section 4 and Section 5, proving some properties of them and verifying the generalized Wilf's conjecture in some particular cases. The two classes of semigroups introduced in Section 3 and Section 4 are inspired, in a certain sense, by some semigroups in $\mathbb{N}^2$ depicted in \cite{matthews2001weierstrass}. We conclude with some remarks and open questions, one of them provided by Shalom Elihaou, after a personal communication with him.

\section{preliminaries}

Recall that a numerical semigroup $S$ is a submonoid of $\mathbb{N}$ such that $\mathbb{N}\setminus S$ is a finite set. We denote by $\operatorname{m}(S)=\min(S\setminus \{0\}$ the \emph{multiplicity} of $S$, and $\operatorname{F}(S)=\max(\mathbb{Z}\setminus S)$ the Frobenius number of $S$. We consider in this paper a straightforward generalization of the concept of a numerical semigroup, provided for the first time in \cite{failla2016algorithms}, named \emph{generalized numerical semigroup} (GNS for short), that is a submonoid of $\mathbb{N}^d$ having finite complement in it. In particular all definitions we introduce for GNSs can be trivially considered also for numerical semigroups. \\
So, let $S\subseteq \mathbb{N}^d$ be a GNS. We consider the following notations:

\begin{itemize}
\item $\operatorname{H}(S)=\mathbb{N}^d\setminus S$ is the set of \emph{gaps} of $S$ and $\operatorname{g}(S)=|\operatorname{H}(S)|$ is called the \emph{genus} of $S$.
\item $\operatorname{PF}(S)=\{\mathbf{x}\in \operatorname{H}(S)\mid \mathbf{x}+\mathbf{s}\in S\  \mbox{for all}\ \mathbf{s}\in S\setminus \{\mathbf{0}\}$ is the set of \emph{pseudo-Frobenius} elements of $S$ and $\operatorname{t}(S)=|\operatorname{PF}(S)|$ is called the \emph{type} of $S$.
\item $\operatorname{SG}(S)=\{\mathbf{x}\in \operatorname{PF}(S)\mid 2\mathbf{x} \in S\}$ is the set of \emph{special gaps} of $S$.
\end{itemize}


It is known that a numerical semigroup $S$ is irreducible if and only if $\operatorname{SG}(S)=\{\operatorname{F}(S)\}$, and in such a case it can occur either $\operatorname{t}(S)=\{\operatorname{F}(S)\}$ or  $\operatorname{t}(S)=\{\operatorname{F}(S),\operatorname{F}(S)/2\}$. In the first case $S$ is called \emph{symmetric}, in the second case $S$ is called \emph{pseudo-symmetric} (see \cite[Chapter 3]{rosales2009numerical}). Irreducible GNSs are studied in \cite{cisto2019irreducible}.\\
We say that the set $A\subseteq \mathbb{N}^d$ generates $S$ if $S=\{\sum_{i=1}^e n_i\mathbf{a}_i\mid \mathbf{a}_i\in A, n_i \in \mathbb{N}\ \mbox{for all}\ i\in [e], e\in \mathbb{N}\}$, where as usual we denote $[e]=\{1,2,\ldots,e\}$ for $e\in \mathbb{N}$. If no proper subset of $A$ generates $S$ then $A$ is called a minimal system of generators of $S$. It has been proved that every GNS has a unique finite minimal system of generators (see \cite[Proposition 2.3]{Analele}).\\ 
In $\mathbb{N}^d$ we consider the natural partial order: $\mathbf{x}\leq \mathbf{y}$ if and only if $\mathbf{y}-\mathbf{x}\in\mathbb{N}^d$, $\mathbf{x},\mathbf{y}\in \mathbb{N}^d$. We recall that a total order $\prec$ in $\mathbb{N}^d$ is a \emph{monomial order} if:
\begin{enumerate}
\item $\mathbf{0}\prec \mathbf{u}$, for all $\mathbf{u}\in \mathbb{N}^d$.
\item If $\mathbf{u},\mathbf{v}\in\mathbb{N}$ and $\mathbf{u}\prec \mathbf{v}$, then $\mathbf{u}+\mathbf{w}\prec \mathbf{v}+\mathbf{w}$ for all $\mathbf{w}\in \mathbb{N}^d$.
\end{enumerate}

Different examples of monomial orders are provided in \cite{failla2016algorithms}. Consider now the following definitions: 

\begin{itemize}
\item Let $\operatorname{G}(S)$ be the set of minimal generators of $S$ and $\operatorname{e}(S)=|\operatorname{G}(S)|$, called \emph{embbeding dimension}.
\item Let $\operatorname{N}(S)=\{\mathbf{s}\in S\mid \mathbf{s}\leq \mathbf{h}\ \mbox{for some}\ \mathbf{h}\in \operatorname{H}(S)\}$ and denote $\operatorname{n}(S)=|\operatorname{N}(S)|$.
\item Let $\operatorname{c}(S)=|\{\mathbf{n}\in \mathbb{N}^d\mid \mathbf{n}\leq \mathbf{h}\ \mbox{for some}\ \mathbf{h}\in \operatorname{H}(S)\}$ 
\end{itemize}

The three invariants defined above are involved in the generalization for GNSs of a well known conjecture on numerical semigroups, called Wilf's conjecture, that states $\operatorname{e}(S)\operatorname{n}(S)\geq \operatorname{F}(S)+1$. This is still an open problem (see \cite{delgado2019conjecture} for a survey). Such a conjecture has been generalized for GNSs in \cite{cisto2020generalization} as:
\begin{conjecture}[Generalized Wilf's conjecture]
Let $S\subseteq \mathbb{N}^d$ be a GNS. Then $$\operatorname{e}(S)\operatorname{n}(S)\geq d \operatorname{c}(S) \ \ \ \mbox{or equivalently}\ \ \  (\operatorname{e}(S)-d)\operatorname{n}(S)\geq d\operatorname{g}(S)$$. 
\end{conjecture}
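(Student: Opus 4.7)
The statement is a far-reaching open conjecture: setting $d=1$ specialises it to the classical Wilf conjecture, which is still unsettled in dimension one. Any realistic plan is therefore not to prove it in full generality but to devise verifiable sufficient conditions and to confirm them on structurally distinguished classes of GNSs. Accordingly, my plan has two stages: (i) establish a type-based criterion implying generalized Wilf; (ii) check the criterion on the families to be singled out in Sections 3--5.

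For step (i) I would mimic the classical one-dimensional sufficient conditions of the form $\operatorname{e}(S)\geq c\cdot\operatorname{t}(S)$, which are known to force Wilf in dimension one. The natural $d$-dimensional route is to partition $\operatorname{H}(S)$ via the pseudo-Frobenius set $\operatorname{PF}(S)$: every gap $\mathbf{h}\in\operatorname{H}(S)$ satisfies $\mathbf{h}+\mathbf{s}\in \operatorname{PF}(S)$ for some $\mathbf{s}\in S$, and this should yield a bound of the form $\operatorname{g}(S)\leq \operatorname{t}(S)(\operatorname{n}(S)+1)$. Combining such an estimate with a hypothesis of the form $\operatorname{e}(S)-d\geq d\,\operatorname{t}(S)$ closes $(\operatorname{e}(S)-d)\operatorname{n}(S)\geq d\,\operatorname{g}(S)$, which is the stated form of the conjecture.

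For step (ii), each class of GNSs should be treated by describing the invariants $\operatorname{e}$, $\operatorname{n}$, $\operatorname{t}$, $\operatorname{g}$ explicitly. For the $T$-stripes of Section 3, all data should be expressible in terms of the invariants of the underlying numerical semigroup $T$, so that the $d$-dimensional inequality reduces, up to a controlled factor depending on the ``width'' of the stripe, to the classical Wilf inequality for $T$, which is known in many subclasses. For the two further classes of Sections 4 and 5 I expect only partial results, corresponding to those regimes in which $\operatorname{N}(S)$ and $\operatorname{H}(S)$ admit combinatorial descriptions amenable to direct computation.

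The main obstacle is step (i): formulating a type-based sufficient condition that is sharp enough in dimension $d\geq 2$. In dimension one the partial order is total and the Apéry set provides a disjoint model of $S$ modulo $\operatorname{m}(S)$; in $\mathbb{N}^d$ the partial order has many incomparable elements, so any Apéry-like decomposition of $\operatorname{H}(S)$ through $\operatorname{PF}(S)$ is not rigid, and controlling the overlaps between the subsets attached to distinct pseudo-Frobenius elements is delicate. This is precisely what prevents the argument from extending unconditionally to all GNSs and what motivates the class-by-class analysis carried out in the rest of the paper.
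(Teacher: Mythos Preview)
You correctly identify that the statement is an open conjecture and outline a plan that matches the paper's strategy: a type-based sufficient condition (Corollary~\ref{wilfType}) followed by class-by-class verification. Two points deserve correction. First, the bound you anticipate, $\operatorname{g}(S)\leq \operatorname{t}(S)(\operatorname{n}(S)+1)$, is too weak to close the implication with the hypothesis $\operatorname{e}(S)-d\geq d\,\operatorname{t}(S)$; the paper obtains the sharper $\operatorname{g}(S)\leq \operatorname{t}(S)\operatorname{n}(S)$ via the injection $\mathbf{x}\mapsto (\mathbf{f}_{\mathbf{x}},\mathbf{f}_{\mathbf{x}}-\mathbf{x})\in\operatorname{PF}(S)\times\operatorname{N}(S)$, and the mechanism is that every gap is $\leq_S$ some pseudo-Frobenius element (so $\mathbf{f}-\mathbf{h}\in S$), not that $\mathbf{h}+\mathbf{s}\in\operatorname{PF}(S)$. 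Second, your expectation that the $T$-stripe case reduces to the one-dimensional Wilf inequality for $T$ is not what the paper does: Section~3 computes $\operatorname{e}(S)$ and $\operatorname{t}(S)$ explicitly in terms of binomial coefficients and proves $\operatorname{e}(S)\geq d(\operatorname{t}(S)+1)$ directly, without invoking Wilf for $T$; it is Section~5 (the $\mathrm{Axis}$ class) where the argument genuinely reduces to the classical Wilf conjecture for the constituent numerical semigroups $S_1,\ldots,S_d$.
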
 

We want to consider a sufficient condition for a GNS to satisfy the generalized Wilf's conjecture, introduced for the first time in the Ph.D thesis of the first author (see \cite{thesis}), that we report here with its proof for completeness. To obtain such a condition we need to consider the following property: 

\begin{proposition} Let $S\subseteq \mathbb{N}^{d}$ be a GNS and let $\operatorname{t}(S)=|\operatorname{PF}(S)|$. Then $\operatorname{g}(S)\leq \operatorname{t}(S)\operatorname{n}(S)$.
\end{proposition}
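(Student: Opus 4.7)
The plan is to construct an injection from $\operatorname{H}(S)$ into $\operatorname{PF}(S)\times \operatorname{N}(S)$. Once this is done, taking cardinalities yields $\operatorname{g}(S)\leq \operatorname{t}(S)\operatorname{n}(S)$, which is the desired inequality.

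The starting point is to equip $\operatorname{H}(S)$ with the partial order defined by $\mathbf{x}\preceq \mathbf{y}$ if and only if $\mathbf{y}-\mathbf{x}\in S$. First I would verify that the maximal elements of $(\operatorname{H}(S),\preceq)$ are exactly the pseudo-Frobenius elements: if $\mathbf{f}\in \operatorname{H}(S)$ is $\preceq$-maximal, then for every $\mathbf{s}\in S\setminus\{\mathbf{0}\}$ the element $\mathbf{f}+\mathbf{s}$ cannot be a gap (otherwise $\mathbf{f}\prec \mathbf{f}+\mathbf{s}$ inside $\operatorname{H}(S)$), so $\mathbf{f}+\mathbf{s}\in S$ and hence $\mathbf{f}\in \operatorname{PF}(S)$; the converse is immediate from the definition of $\operatorname{PF}(S)$. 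Because $\operatorname{H}(S)$ is finite, starting from any gap $\mathbf{h}$ and repeatedly moving upward in $\preceq$ one reaches a maximal element. Hence for every $\mathbf{h}\in \operatorname{H}(S)$ there exists $\mathbf{f}\in \operatorname{PF}(S)$ with $\mathbf{f}-\mathbf{h}\in S$.

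Next, for each $\mathbf{h}\in \operatorname{H}(S)$ I would choose such an $\mathbf{f}=\mathbf{f}(\mathbf{h})$ and set $\mathbf{s}(\mathbf{h})=\mathbf{f}(\mathbf{h})-\mathbf{h}$. Since $\mathbf{h}\in \mathbb{N}^d$, we have $\mathbf{s}(\mathbf{h})\leq \mathbf{f}(\mathbf{h})$ in the componentwise order, and $\mathbf{f}(\mathbf{h})\in \operatorname{H}(S)$, so $\mathbf{s}(\mathbf{h})\in \operatorname{N}(S)$ by definition. This defines a map
\[
\varphi\colon \operatorname{H}(S)\longrightarrow \operatorname{PF}(S)\times \operatorname{N}(S),\qquad \mathbf{h}\longmapsto \bigl(\mathbf{f}(\mathbf{h}),\,\mathbf{s}(\mathbf{h})\bigr).
\]
The map $\varphi$ is injective because $\mathbf{h}$ can be recovered from the image as $\mathbf{h}=\mathbf{f}(\mathbf{h})-\mathbf{s}(\mathbf{h})$. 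Taking cardinalities gives $\operatorname{g}(S)=|\operatorname{H}(S)|\leq |\operatorname{PF}(S)|\cdot |\operatorname{N}(S)|=\operatorname{t}(S)\operatorname{n}(S)$.

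The only delicate point is the characterization of pseudo-Frobenius elements as the $\preceq$-maximal gaps, together with the existence of such a maximal element above every gap; both follow at once from the finiteness of $\operatorname{H}(S)$ combined with the definitions. Everything else is bookkeeping, so I do not expect a genuine obstacle in the argument.
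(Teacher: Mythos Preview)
Your argument is correct and follows essentially the same route as the paper: both construct an injection $\operatorname{H}(S)\to \operatorname{PF}(S)\times \operatorname{N}(S)$ by sending a gap $\mathbf{h}$ to a pair $(\mathbf{f},\mathbf{f}-\mathbf{h})$ with $\mathbf{f}\in\operatorname{PF}(S)$ dominating $\mathbf{h}$ in the $S$-order. The only cosmetic differences are that the paper selects $\mathbf{f}$ canonically via a monomial order and cites \cite{cisto2019irreducible} for the existence of such an $\mathbf{f}$, whereas you prove existence directly from finiteness of $\operatorname{H}(S)$ and make an arbitrary choice; neither change affects the substance of the proof.
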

\begin{proof}
Consider in $\mathbb{N}^{d}$ a monomial order $\prec$. Let $\textbf{x}\in \operatorname{H}(S)$, we define $\textbf{f}_{\textbf{x}}=\min_{\prec}\{\textbf{f}\in \operatorname{PF}(S)\mid \textbf{x}\leq_{S}\textbf{f}\}$. The previous set is not empty by \cite[Proposition 1.3]{cisto2019irreducible}. So we can consider the function 
$$\phi:\operatorname{H}(S)\longrightarrow \operatorname{PF}(S)\times \operatorname{N}(S),\ \ \ \ \ \textbf{x}\longmapsto(\textbf{f}_{\textbf{x}},\textbf{f}_{\textbf{x}}-\textbf{x})$$
It is easy to see that $\phi$ is injective so $\operatorname{g}(S)\leq \operatorname{t}(S)\operatorname{n}(S)$. 
\end{proof}

Since $\operatorname{c}(S)=\operatorname{g}(S)+\operatorname{n}(S)$, then $\operatorname{c}(S)\leq (\operatorname{t}(S)+1)\operatorname{n}(S)$. So we can state the following:

\begin{corollary} Let $S\subseteq \mathbb{N}^{d}$ be a GNS and $\operatorname{t}(S)=|\operatorname{PF}(S)|$. If $\operatorname{e}(S)\geq d(\operatorname{t}(S)+1)$ then $S$ satisfies generalized Wilf's conjecture.
\label{wilfType}
\end{corollary}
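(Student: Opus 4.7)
The plan is to chain together the two inequalities already at our disposal: the one just proved in the preceding proposition, $\operatorname{g}(S) \leq \operatorname{t}(S)\operatorname{n}(S)$, and the identity $\operatorname{c}(S) = \operatorname{g}(S) + \operatorname{n}(S)$ noted immediately before the corollary statement. Adding $\operatorname{n}(S)$ to both sides of the proposition's inequality yields $\operatorname{c}(S) \leq (\operatorname{t}(S)+1)\operatorname{n}(S)$, which is the key estimate.

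From here the argument is a one-line computation. Multiplying the hypothesis $\operatorname{e}(S) \geq d(\operatorname{t}(S)+1)$ by the non-negative quantity $\operatorname{n}(S)$ gives
$$\operatorname{e}(S)\operatorname{n}(S) \geq d(\operatorname{t}(S)+1)\operatorname{n}(S) \geq d\operatorname{c}(S),$$
which is precisely the statement of the generalized Wilf's conjecture for $S$.

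There is essentially no obstacle here: the corollary is an immediate consequence of the proposition and the identity $\operatorname{c}(S) = \operatorname{g}(S)+\operatorname{n}(S)$. The only thing to be mindful of is that $\operatorname{n}(S)$ is indeed non-negative (in fact positive, since $\mathbf{0} \in \operatorname{N}(S)$ whenever $\operatorname{H}(S) \neq \emptyset$, and the boundary case $\operatorname{H}(S) = \emptyset$, i.e.\ $S = \mathbb{N}^d$, is trivial as all invariants vanish appropriately), so multiplying through by $\operatorname{n}(S)$ preserves the inequality.
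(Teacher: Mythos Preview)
Your proof is correct and follows essentially the same approach as the paper: deriving $\operatorname{c}(S)\leq (\operatorname{t}(S)+1)\operatorname{n}(S)$ from the proposition and the identity $\operatorname{c}(S)=\operatorname{g}(S)+\operatorname{n}(S)$, then multiplying the hypothesis by $\operatorname{n}(S)$.
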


Actually, the previous property is a generalization of a well known property of numerical semigroups (see \cite{dobbs2003question}) and we do not know till now any general class of GNSs satisfying it. In the next section we introduce a class of GNSs whose semigroups satisfy the condition of the above corollary. The following lemma will be useful.

\begin{lemma} Let $\alpha_{1}\geq\alpha_{2}\geq\cdots\geq \alpha_{d}$ be elements in $\mathbb{N}$ such that $\sum_{i=1}^{d}\alpha_{i}>g$, $g$ nonzero integer. Then there exist $\beta_{1},\beta_{2},\ldots,\beta_{d}$ such that $\alpha_{i}-\beta_{i}\in\mathbb{N}$ and $\sum_{i=1}^{d}\beta_{i}=g$. \label{numeri}
\end{lemma}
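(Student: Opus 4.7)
The plan is to proceed by induction on $d$, using a simple greedy strategy: take as much as possible from the first coordinate, then recurse. The assumption $\sum_{i=1}^d \alpha_i > g$ is precisely what guarantees enough ``capacity'' at each stage. The monotonicity hypothesis $\alpha_1 \geq \alpha_2 \geq \cdots \geq \alpha_d$ is not actually needed for the argument, but it does no harm.

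For the base case $d=1$, the hypothesis becomes $\alpha_1 > g$, so $\alpha_1 \geq g$ (since both are integers), and we may set $\beta_1 = g$; then $\alpha_1 - \beta_1 \in \mathbb{N}$ and $\sum_{i=1}^1 \beta_i = g$.

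For the inductive step, assume the statement for $d-1$, and consider $\alpha_1, \dots, \alpha_d$ with $\sum_{i=1}^d \alpha_i > g$. Split into two cases. If $\alpha_1 \geq g$, simply put $\beta_1 = g$ and $\beta_2 = \cdots = \beta_d = 0$; both conditions are immediate. If $\alpha_1 < g$, set $\beta_1 = \alpha_1$ and let $g' = g - \alpha_1$. Then $g'$ is a positive integer, and from $\sum_{i=1}^d \alpha_i > g$ we get $\sum_{i=2}^d \alpha_i > g'$; the inductive hypothesis applied to $\alpha_2 \geq \cdots \geq \alpha_d$ with $g'$ furnishes $\beta_2, \dots, \beta_d$ with $\alpha_i - \beta_i \in \mathbb{N}$ and $\sum_{i=2}^d \beta_i = g'$, hence $\sum_{i=1}^d \beta_i = g$ as desired.

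I do not expect any real obstacle here: the argument is a standard greedy/induction combination, and the strict inequality $\sum \alpha_i > g$ is exactly what keeps the recursion valid (in particular, in the case $\alpha_1 < g$ it ensures $g' > 0$ and that the reduced hypothesis still holds strictly, so induction can be applied without a degenerate base case). The only point worth flagging is that $g$ should be interpreted as a \emph{positive} integer for the statement to be meaningful, since $\sum \alpha_i \geq 0$ and the $\beta_i$ must be nonnegative.
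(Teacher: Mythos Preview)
Your proof is correct. The greedy induction on $d$ is clean and watertight: the two cases exhaust the possibilities, and in the second case the strict inequality $\sum_{i=2}^d \alpha_i > g'$ is exactly what licenses the recursive call. Your remark that the monotonicity hypothesis is unnecessary is also accurate.

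The paper takes a different, more laborious route. Rather than inducting on $d$, it runs an iterative ``layer-peeling'' procedure: at each step it identifies the largest index $r$ with $\alpha_r > 0$, subtracts $1$ from each of $\alpha_1,\dots,\alpha_r$, decreases the remaining target by $r$, and repeats until the target drops below the number of currently nonzero entries, at which point it finishes in one more step. The resulting $\beta_i$ are the accumulated decrements. Both arguments are valid; yours is shorter and more transparent, while the paper's construction has the mild side effect of producing a more ``balanced'' $(\beta_1,\dots,\beta_d)$ (each $\beta_i$ differs from the next by at most $1$ unless forced otherwise), though this balance is never used later in the paper. For the purposes of the lemma as stated, your approach is simply a more economical proof of the same fact.
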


\begin{proof}

If there exists $j\in \{1,2,\ldots,d\}$ such that $\alpha_{j}\geq g$ then we can consider $\beta_{j}=g$ and $\beta_{i}=0$ for every $i\in \{1,2,\ldots,d\}\setminus \{j\}$. If $\alpha_{i}<g$ for all $i=1,2,\ldots,d$ then let $r^{(1)}\in \{1,\ldots,d\}$ such that $\alpha_{i}=0$ for all $i=r^{(1)}+1,r^{(1)}+2,\ldots,d$. If $r^{(1)}\geq g$ then we fix $\beta_{i}=1$ for $i=1,\ldots,g$ and $\beta_{j}=0$ for $j=g+1\ldots,d$. So $\alpha_{i}-\beta_{i}\geq 0$ for all $i\in \{1,\ldots,d\}$ and $\sum_{i=1}^{d}\beta_{i}=g$. If $r^{(1)}<g$ we consider the following steps:\\
\underline{First step}. Put $\gamma_{i}^{(1)}=1$ for $i \in\{1,\ldots,r^{(1)}\}$ and $\gamma_{j}^{(1)}=0$ for $j \in\{r^{(1)}+1,\ldots,d\}$. Let $\alpha_{i}^{(1)}=\alpha_{i}-\gamma_{i}^{(1)}\geq 0$ for $i=1,\ldots,d$ and $\triangle^{(1)}=g-r^{(1)}$. Observe that $$\sum_{i=1}^{d}\alpha_{i}^{(1)}=\sum_{i=1}^{d}\alpha_{i}-\sum_{i=1}^{d}\gamma_{i}^{(1)}>g-r^{(1)}>0,$$
 in particular there exists $j\in \{1,\ldots,d\}$ such that $\alpha_{j}^{(1)}\neq 0$.\\
\underline{Second step}. Let $r^{(2)}\in \{1,\ldots,r^{(1)}\}$ such that $\alpha_{i}^{(1)}=0$ for $i\in\{r^{(2)}+1,\ldots,d\}$. If $r^{(2)}\geq\triangle^{(1)}$ then we fix $\gamma_{i}^{(2)}=1$ for $i=1,\ldots,\triangle^{(1)}$ and $\gamma_{j}^{(2)}=0$ for $j=\triangle^{(1)}+1,\ldots,d$. We consider $\beta_{i}=\gamma_{i}^{(1)}+\gamma_{i}^{(2)}$ for $i=1,\ldots,d$ and we have $\alpha_{i}-\beta_{i}=\alpha_{i}^{(1)}-\gamma_{i}^{(2)}\geq 0$ for every $i\in \{1,\ldots,d\}$ and $\sum_{i}^{d}\beta_{i}=r^{(1)}+\triangle^{(1)}=g$.
If $r^{(2)}<\triangle^{(1)}$ we define $\gamma_{i}^{(2)}=1$ for $i=1,\ldots, r^{(2)}$, $\gamma_{j}^{(2)}=0$ for $j=r^{(2)}+1,\ldots,d$ and $\alpha_{i}^{(2)}=\alpha_{i}^{(1)}-\gamma_{i}^{(2)}\geq 0$ for $i=1,\ldots,d$. We put $\triangle^{(2)}=\triangle^{(1)}-r^{(2)}=g-r^{(1)}-r^{(2)}>0$ and observe that $\sum_{i}^{d}\alpha_{i}^{(2)}>g-r^{(1)}-r^{(2)}>0$, so there exists $j\in \{1,\ldots,d\}$ such that $\alpha_{j}^{(2)}\neq 0$. Therefore we can repeat the procedure from the beginning of the second step, considering the greatest index $r^{(3)}\in \{1,\ldots,r^{(2)}\}$ such that $\alpha_{i}^{(2)}=0$ for $i \in\{r^{(3)}+1,\ldots,d\}$ and considering the two cases $r^{(3)}\geq \triangle^{(2)}$ (and in this case we conclude) or $r^{(3)}< \triangle^{(2)}$, and so on. After a finite number $h$ of steps, it occurs that $r^{(h)} \geq \triangle^{(h-1)}$ (because it is impossible to obtain $g-r^{(1)}-\cdots-r^{(h)}>0$ for infinitely many steps) since $r^{(j)}>0$ for every $j$. Since $r^{(h)}\geq \triangle^{(h-1)}$, we obtain $\beta_{i}=\sum_{j=1}^{h}\gamma_{i}^{(j)}$ for every $i=1,\ldots,d$ and these elements satisfy the requested condition.

\end{proof}

\noindent The following example shows the procedure in the proof of the previous lemma.

\begin{example}
Let $d=4$, $g=10$ and consider $\alpha_{1}=8,\alpha_{2}=7,\alpha_{3}=3,\alpha_{4}=2$. We have $\sum_{i=1}^{4}\alpha_{i}=20>g$. Moreover $\alpha_{i}<g$ for $i=1,2,3,4$.\\
We have $r^{(1)}=4<g$. So we define $\gamma_{i}^{(1)}=1$ for $i=1,2,3,4$ and consider the following positive integers:
\begin{itemize}
\item $\alpha_{1}^{(1)}=\alpha_{1}-\gamma_{1}^{(1)}=7$
\item $\alpha_{2}^{(1)}=\alpha_{2}-\gamma_{2}^{(1)}=6$
\item $\alpha_{3}^{(1)}=\alpha_{3}-\gamma_{3}^{(1)}=2$
\item $\alpha_{4}^{(1)}=\alpha_{4}-\gamma_{4}^{(1)}=1$.
\end{itemize}
We have $\triangle^{(1)}=g-r^{(1)}=6$ and put $r^{(2)}=4<\triangle^{(1)}$. So in the second step we consider $\gamma_{i}^{(2)}=1$ for $i=1,2,3,4$ and the following:
\begin{itemize}
\item $\alpha_{1}^{(2)}=\alpha_{1}^{(1)}-\gamma_{1}^{(2)}=6$
\item $\alpha_{2}^{(2)}=\alpha_{2}^{(1)}-\gamma_{2}^{(2)}=5$
\item $\alpha_{3}^{(2)}=\alpha_{3}^{(1)}-\gamma_{3}^{(2)}=1$
\item $\alpha_{4}^{(2)}=\alpha_{4}^{(1)}-\gamma_{4}^{(2)}=0$.
\end{itemize}
We have $\triangle^{(2)}=\triangle^{(1)}-r^{(2)}=g-r^{(1)}-r^{(2)}=2$ and define $r^{(3)}=3>\triangle^{(2)}$. So the next step is the last, in which $\gamma_{1}^{(3)}=1$, $\gamma_{2}^{(3)}=1$, $\gamma_{3}^{(3)}=0$, $\gamma_{4}^{(3)}=0$. We conclude defining:
\begin{itemize}
\item $\beta_{1}=\gamma_{1}^{(1)}+\gamma_{1}^{(2)}+\gamma_{1}^{(3)}=3$
\item $\beta_{2}=\gamma_{2}^{(1)}+\gamma_{2}^{(2)}+\gamma_{2}^{(3)}=3$
\item $\beta_{3}=\gamma_{3}^{(1)}+\gamma_{3}^{(2)}+\gamma_{3}^{(3)}=2$
\item $\beta_{4}=\gamma_{4}^{(1)}+\gamma_{4}^{(2)}+\gamma_{4}^{(3)}=2$.
\end{itemize}

\end{example}

In the following we denote by $\mathbf{e}_1,\mathbf{e}_2,\ldots,\mathbf{e}_d$ the standard basis vectors of the vector space $\mathbb{R}^d$.\\
Let $\mathbf{x}=(x^{(1)},x^{(2)},\ldots,x^{(d)})\in \mathbb{N}^{d}$. Simplifying our notation, we define along the paper $|\mathbf{x}|:=||\mathbf{x}||_1$, that is $|\mathbf{x}|=\sum_{i=1}^{d}x^{(i)}$.
Finally, recall that if $A=\{\mathbf{x}\in \mathbb{N}^d \mid |\mathbf{x}|=i\}$, it is well known that $|A|=\binom{i+d-1}{d-1}$. 

\section{$T$-stripe GNSs}

\begin{definition}\label{Definition: T,m Stripe}
Let $T$ be a numerical semigroup. Let $\operatorname{H}_0=\{\mathbf{x}\in \mathbb{N}^d:0<|\fx|<\operatorname{m}(T)\}$ and $\operatorname{H}_i=\{h\mathbf{e}_i:h\in \operatorname{H}(T)\}$ for $i\in[d]$. We set $H=\cup_{i=0}^{d}\operatorname{H}_i$. It is trivial to see that $S=\mathbb{N}^d\backslash H$ is a generalized numerical semigroup that we call \textit{$T$-stripe} GNS.
\end{definition}

If $S$ is a $T$-stripe GNS then we denote $\operatorname{K}(S)=\operatorname{H}_0\backslash\cup_{i=1}^d\operatorname{H}_i$.

\begin{proposition}\label{Prop: Type T,m Stripe}
	Let $S$ be the $T$-stripe GNS. Then 
	$$ \operatorname{PF}(S)=\bigcup_{i=1}^d\{f\mathbf{e}_i:f\in \operatorname{PF}(T)\}\cup\operatorname{K}(S).$$
	In particular $\operatorname{t}(S)=|\operatorname{H}_0|-d(m-1)+d\cdot \operatorname{t}(T)$.
\end{proposition}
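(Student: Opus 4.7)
The strategy is to first unpack the structure of $S$ and then verify the two inclusions separately, finishing with a short inclusion-exclusion count.

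The key structural observation is that every nonzero element $\negs \in S$ satisfies $|\negs| \geq \operatorname{m}(T)$. Indeed, if $\negs$ has at least two nonzero components, then $\negs \notin \operatorname{H}_0$ forces $|\negs| \geq \operatorname{m}(T)$ directly; if $\negs = s \negei$ lies on the $i$-th axis, then $\negs \notin \operatorname{H}_i$ forces $s \in T \setminus \{0\}$, hence $s \geq \operatorname{m}(T)$. I will use this observation repeatedly.

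For the inclusion $\bigcup_{i=1}^d\{f\negei : f \in \operatorname{PF}(T)\} \cup \operatorname{K}(S) \subseteq \operatorname{PF}(S)$, I split into two cases. First, take $\negx = f\negei$ with $f \in \operatorname{PF}(T)$ and let $\negs \in S \setminus \{\neg0\}$. If $\negs = s\negei$ lies on the same axis, then $s \in T \setminus \{0\}$ and $\negx + \negs = (f+s)\negei$ with $f + s \in T$ since $f \in \operatorname{PF}(T)$; otherwise $\negs$ contributes a nonzero coordinate off the $i$-th axis, so $\negx + \negs$ has at least two nonzero components and $|\negx + \negs| \geq |\negs| \geq \operatorname{m}(T)$, placing it in $S$. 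Second, if $\negx \in \operatorname{K}(S)$, then by definition $\negx$ has at least two nonzero components (otherwise it would lie on some $\operatorname{H}_i$ since $|\negx| < \operatorname{m}(T)$ forces $\negx$'s only nonzero coordinate to belong to $\operatorname{H}(T)$); for any $\negs \in S \setminus \{\neg0\}$, $\negx + \negs$ still has at least two nonzero components and $|\negx + \negs| > \operatorname{m}(T)$, so $\negx + \negs \in S$.

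For the reverse inclusion, let $\negx \in \operatorname{PF}(S)$. If $\negx = h\negei$ for some $i$ and $h > 0$, then $\negx \in \operatorname{H}(S)$ forces $h \in \operatorname{H}(T)$; for each $s \in T \setminus \{0\}$ we have $s\negei \in S \setminus \{\neg0\}$, so $(h+s)\negei = \negx + s\negei \in S$, which gives $h + s \in T$. Hence $h \in \operatorname{PF}(T)$. If instead $\negx$ has at least two nonzero components, then $\negx \notin \operatorname{H}_i$ for $i \geq 1$, so the only way to have $\negx \in \operatorname{H}(S)$ is $\negx \in \operatorname{H}_0 \setminus \bigcup_{i=1}^d \operatorname{H}_i = \operatorname{K}(S)$.

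For the type formula, the union in the first part is disjoint: the sets $\{f\negei : f \in \operatorname{PF}(T)\}$ are pairwise disjoint (distinct axes and $0 \notin \operatorname{PF}(T)$), and they are disjoint from $\operatorname{K}(S)$ since its elements carry at least two nonzero coordinates. Therefore $\operatorname{t}(S) = d \cdot \operatorname{t}(T) + |\operatorname{K}(S)|$. To compute $|\operatorname{K}(S)|$, note that $\operatorname{H}_0 \cap \operatorname{H}_i = \{k\negei : 1 \leq k \leq m-1\}$ has cardinality $m-1$, and these intersections are pairwise disjoint for distinct $i$, so $|\operatorname{K}(S)| = |\operatorname{H}_0| - d(m-1)$. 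The main obstacle, though minor, is being careful that every element of $\operatorname{K}(S)$ really has at least two nonzero coordinates and that the intersections $\operatorname{H}_0 \cap \operatorname{H}_i$ are pairwise disjoint; everything else is bookkeeping on top of the norm bound $|\negs| \geq \operatorname{m}(T)$ for $\negs \in S \setminus \{\neg0\}$.
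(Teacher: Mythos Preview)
Your proof is correct and follows essentially the same approach as the paper's: both arguments establish the two inclusions by splitting into the on-axis and off-axis cases and using the norm bound $|\negs|\geq \operatorname{m}(T)$ for nonzero $\negs\in S$. Your write-up is a bit more explicit in isolating that norm observation at the outset and in justifying the disjointness needed for the type count, whereas the paper leaves the formula for $\operatorname{t}(S)$ to the reader; otherwise the two proofs are the same.
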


\begin{proof}
	We set $\mathcal{L}=\bigcup_{i=1}^d\{f\mathbf{e}_i:f\in \operatorname{PF}(T)\}\cup\operatorname{K}(S)$. We prove that $\operatorname{PF}(S)\subseteq \mathcal{L}$. Consider $\fx\in \operatorname{PF}(S)$. Then $\fx\in \operatorname{H}(S)$ and $\fx+\mathbf{s}\in S^{*}$ for all $\mathbf{s}\in S^*$. Since $\fx \in \operatorname{H}(S)$, we have $\fx\in \operatorname{H}_0$ or $\fx\in \operatorname{H}_i$ for some $i\in[d]$. Suppose that $\fx\in \operatorname{H}_i$, for some $i\in [d]$. Then $\fx=h\mathbf{e}_i$ for some $h\in \operatorname{H}(T)$. Set $s\in T^*$, then $s\mathbf{e}_i\in S^*$ and $\mathbf{x}+s\mathbf{e}_i=(h+s)\mathbf{e}_i\in S^*$. So it is easy to obtain that $h+s\in T^*$ for all $s\in T^*$, that is $h\in \operatorname{PF}(T)$. In particular $\mathbf{x}\in \{f\mathbf{e}_i:f\in \operatorname{PF}(T)\}$. Assume that $\fx\notin \operatorname{H}_i$ for all $i\in [d]$, so $\fx \in \operatorname{H}_0$. Then trivially $\fx \in \operatorname{K}(S)$. Now we prove that $\mathcal{L}\subseteq \operatorname{PF}(S)$. Let $\fx \in \mathcal{L}$ and $\mathbf{s}\in S^*$. Observe that $\fx+\mathbf{s}\notin \operatorname{H}_0$, because $|\fx+\mathbf{s}|=|\fx|+|\mathbf{s}|\geq |\fx|+m\geq m$. In particular if $\mathbf{x}\in \operatorname{K}(S)$ we have also $\mathbf{x}+\mathbf{s}\notin \operatorname{H}_i$ for all $i\in [d]$, that is $\mathbf{x}+\mathbf{s}\in S$. Assume that $\fx=f\mathbf{e}_i$ for some $i\in [d]$ and $f\in \operatorname{PF}(T)$. Suppose that $\fx+\mathbf{s}\in \operatorname{H}(S)\backslash \operatorname{H}_0$. Then $\fx+\mathbf{s}\in \operatorname{H}_i\setminus \operatorname{H}_0$ for some $i$, in particular $\mathbf{s}=s\mathbf{e}_i$ with $s\in T^*$, that leads to a contradiction since we obtain $f+s\in H(T)$ but $f\in \operatorname{PF}(T)$. Hence $\fx +\mathbf{s}\in S^*$, so $\fx \in \operatorname{PF}(S)$. The last statement on $\operatorname{t}(S)$ easily follows. 
\end{proof}

In the following, for $i\in \mathbb{N}$, as usual we denote $\mathbb{N}\mathbf{e}_i=\{x\mathbf{e}_i \mid x\in\mathbb{N}\}$.

\begin{proposition}\label{Prop: generatori T,m Stripe}
	Let $S$ be the $T$-stripe GNS in $\mathbb{N}^d$. Let $A_{0}=\{\fx\in \mathbb{N}^d\mid m\leq |\fx|\leq 2m-1,\fx\notin \mathbb{N}\mathbf{e}_i\ \forall i \in [d]\}$ and $A_i=\{t\mathbf{e}_i\mid t\in \operatorname{G}(T)\}$ for all $i\in [d]$. Then $\operatorname{G}(S)=\bigcup_{i=0}^{d}A_i$, and in particular $$\operatorname{e}(S)=\sum_{i=m}^{2m-1}\binom{i+d-1}{d-1}-d[m-\operatorname{e}(T)].$$
\end{proposition}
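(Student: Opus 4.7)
The plan is to show three things and then count: $(a)$ every element of $A := \bigcup_{i=0}^{d} A_i$ lies in $S$; $(b)$ no element of $A$ can be written as a sum of two nonzero elements of $S$; and $(c)$ $A$ generates $S$ as a monoid. Since $S$ admits a unique finite minimal generating set, items $(a)$--$(c)$ identify $A$ with $\operatorname{G}(S)$, after which a direct count yields the formula for $\operatorname{e}(S)$.

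Step $(a)$ is immediate: for $t\mathbf{e}_i$ with $t \in \operatorname{G}(T)$, we have $t \in T^*$ and $t \geq m$, so $t\mathbf{e}_i \notin \operatorname{H}_0 \cup \operatorname{H}_i$ and obviously $t\mathbf{e}_i \notin \operatorname{H}_j$ for $j \neq i$; for $\mathbf{x} \in A_0$, the conditions $|\mathbf{x}| \geq m$ and $|\operatorname{supp}(\mathbf{x})| \geq 2$ are exactly what is needed to avoid every $\operatorname{H}_j$. For step $(b)$, the key observation is that every $\mathbf{s} \in S^*$ satisfies $|\mathbf{s}| \geq m$ (otherwise $\mathbf{s} \in \operatorname{H}_0$). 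Thus a decomposition $\mathbf{x} = \mathbf{s}_1 + \mathbf{s}_2$ of $\mathbf{x} \in A_0$ with $\mathbf{s}_j \in S^*$ would force $|\mathbf{x}| \geq 2m$, contradicting $|\mathbf{x}| \leq 2m-1$. A similar decomposition of $t\mathbf{e}_i \in A_i$ must have both summands supported on the $i$-th axis, which produces $t = s'_1 + s'_2$ with $s'_1, s'_2 \in T^*$, contradicting $t \in \operatorname{G}(T)$.

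For step $(c)$ I would use strong induction on $|\mathbf{s}|$ for $\mathbf{s} \in S^*$. If $\mathbf{s} = s\mathbf{e}_i$ sits on an axis, writing $s$ as a sum of elements of $\operatorname{G}(T)$ gives a decomposition of $\mathbf{s}$ through $A_i$. If $\mathbf{s}$ has support $\geq 2$ and $|\mathbf{s}| \leq 2m-1$, then $\mathbf{s} \in A_0$ directly. Assume the remaining case: support $\geq 2$ and $|\mathbf{s}| \geq 2m$, and split on whether some coordinate $s_i$ reaches $m$. If some $s_i \geq m$, try $\mathbf{a} = m\mathbf{e}_i \in A_i$; then $|\mathbf{s} - \mathbf{a}| \geq m$ and $\mathbf{s} - \mathbf{a}$ has support $\geq 2$ in all sub-cases except the single configuration where $\operatorname{supp}(\mathbf{s}) = \{i,j\}$ and $s_i = m$ (then $\mathbf{s} - \mathbf{a} = s_j\mathbf{e}_j$, which lies in $S$ iff $s_j \in T$). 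When $s_j \notin T$, the inequalities $|\mathbf{s}| \geq 2m$ and $m \in T$ force $s_j > m$, so switching to $\mathbf{a}' = m\mathbf{e}_j \in A_j$ produces $\mathbf{s} - \mathbf{a}' = m\mathbf{e}_i + (s_j - m)\mathbf{e}_j$, which has support of size $2$ and total norm $\geq m$, hence lies in $S$. If instead every $s_i < m$, then $|\mathbf{s}| \geq 2m$ forces $\operatorname{supp}(\mathbf{s})$ to contain at least three coordinates; applying Lemma \ref{numeri} with $g = m$ yields $\mathbf{a} \leq \mathbf{s}$ with $|\mathbf{a}| = m$, and since no coordinate of $\mathbf{s}$ (hence of $\mathbf{a}$) attains $m$, the element $\mathbf{a}$ has support $\geq 2$, so $\mathbf{a} \in A_0$. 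A quick counting argument then shows that $\mathbf{s} - \mathbf{a}$ has support $\geq 2$ as well: if it were supported only at some coordinate $j$, then $a_i = s_i$ for every $i \neq j$, giving $m = |\mathbf{a}| \geq |\mathbf{s}| - s_j \geq |\mathbf{s}| - (m-1)$ and thus $|\mathbf{s}| \leq 2m - 1$, a contradiction. In every case $\mathbf{s} - \mathbf{a} \in S$ with $|\mathbf{s} - \mathbf{a}| < |\mathbf{s}|$, and the induction hypothesis finishes the decomposition.

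Finally, for the count, $|A_i| = \operatorname{e}(T)$ for each $i \in [d]$; for $A_0$ each norm $i \in \{m, \ldots, 2m-1\}$ contributes $\binom{i+d-1}{d-1} - d$ points (subtracting the $d$ lattice points lying on the coordinate axes), and summing over the $m$ admissible values of $i$ yields $\operatorname{e}(S) = \sum_{i=m}^{2m-1}\binom{i+d-1}{d-1} - d[m - \operatorname{e}(T)]$. I expect the main obstacle to be step $(c)$ in the regime $|\mathbf{s}| \geq 2m$: Lemma \ref{numeri} supplies the combinatorial extraction when all coordinates of $\mathbf{s}$ are small, while the delicate two-axis edge case is rescued by swapping which axis generator is peeled off, as described above.
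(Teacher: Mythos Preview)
Your proof is correct and follows essentially the same approach as the paper: both arguments show that the elements of $A$ are atoms of $S$ via the norm bound $|\mathbf{s}|\geq m$ for $\mathbf{s}\in S^*$, and both show that any off-axis $\mathbf{s}$ with $|\mathbf{s}|\geq 2m$ decomposes in $S$ by peeling off a piece of norm $m$ (using Lemma~\ref{numeri}) and handling separately the two-coordinate edge case $\mathbf{s}=m\mathbf{e}_i+s_j\mathbf{e}_j$. Your treatment of that edge case---swapping the axis and subtracting $m\mathbf{e}_j$ instead---is in fact a bit cleaner than the paper's explicit $\mathbf{u}+\mathbf{v}$ construction, but the underlying strategy is the same.
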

\begin{proof}
	Firstly we prove that $\bigcup_{i=0}^{d}A_i\subseteq \operatorname{G}(S)$. Let $\fx\in \bigcup_{i=0}^{d}A_i$. We may assume that $\fx\in A_0$, because the other case is trivial. If we suppose that there exist $\mathbf{y},\mathbf{z}\in S^*$ such that $\fx=\mathbf{y}+\mathbf{z}$, then $|\fx|=|\mathbf{y}+\mathbf{z}|\geq 2m$, a contradiction since $|\fx|\leq2m-1$. Hence $\fx\in \operatorname{G}(S)$. Now we prove that $\operatorname{G}(S)\subseteq\bigcup_{i=0}^{d}A_i$. Let $\fx=(x^{(1)},\dots,x^{(d)})\in \operatorname{G}(S)$. We suppose that $\fx \notin \bigcup_{i=0}^{d}A_i$. Obviously $\fx$ cannot be on the $i$-th axis of $\mathbb{N}^d$ because it is generated by $A_i$ and in such a case $\fx$ cannot be a minimal generator of $S$. Then $\fx \notin \cup_{i=1}^d\mathbb{N}\mathbf{e}_i$ and $|\fx|\geq 2m$. From Lemma \ref{numeri} it follows that there exists $\mathbf{b}\in \mathbb{N}^d$ with $|\mathbf{b}|=m$ such that $\fx-\mathbf{b}\in \mathbb{N}^d$, so $\fx=\mathbf{a}+\mathbf{b}$ where $\mathbf{a}\in \mathbb{N}^d$ and $|\mathbf{a}|=|\fx-\mathbf{b}|\geq m$. Observe that $\mathbf{b}\in S$ since $|\mathbf{b}|=m$. If $\mathbf{a}\in S$ then $\mathbf{x}\notin \operatorname{G}(S)$, a contradiction. Then $\mathbf{a}\notin S$, in particular $\mathbf{a}\in \cup_{i=1}^{d}\operatorname{H}_i$ because $|\mathbf{a}|\geq m$. Assume that $\mathbf{a}\in \operatorname{H}_i$ for some $i\in[d]$, so $\mathbf{a}=a_i\mathbf{e}_i$ with $a_i\geq m$. As a consequence, since $\mathbf{x}=\mathbf{a}+\mathbf{b}$, we have $x^{(i)}\geq m$. In such a case let $\mathbf{w}=\mathbf{x}-m\mathbf{e}_i$. Observe that $|\mathbf{w}|=|\mathbf{x}-m\mathbf{e}_i|\geq m$. We distinguish two cases. In the first, suppose that $\fx$ has more than two non-null components. Then $\mathbf{w}$ has at least two non-null components , since $\fx=\mathbf{w}+m\mathbf{e}_i$, and therefore $\mathbf{w}\in S$. Then $\fx\notin \operatorname{G}(S)$, a contradiction. In the second case suppose that $\mathbf{x}$ has only two non-null components, so $\mathbf{w}=x^{(k)}\mathbf{e}_k+(x^{(i)}-m)\mathbf{e}_i$ for some $k\in [d]\setminus \{i\}$. If $x^{(i)}>m$ then $\mathbf{w}\in S^*$, hence $x\notin \operatorname{G}(S)$ since $\mathbf{x}=\mathbf{w}+m\mathbf{e}_i$, a contradiction. If $x^{(i)}=m$, then $x^{(k)}\geq m$ because $|\fx|\geq 2m$. Let $r=\min \{j\in \mathbb{N}:jm>x^{(k)}\}$. Then $\fx=\mathbf{u}+\mathbf{v}$, where $\mathbf{u}=(x^{(k)}-m)\mathbf{e}_k+(rm-x^{(k)})\mathbf{e}_i)\in S^*$ and $\mathbf{v}=m\mathbf{e}_k+(x^{(k)}-(r-1)m)\mathbf{e}_i\in S^*$. Hence $\fx\notin \operatorname{G}(S)$, a contradiction again. All cases lead to a contradiction, so necessarily $\fx\in \cup_{i=0}^d A_i$. The embedding dimension can be easily computed.    
\end{proof}

\begin{example}
	Let $T$ be the numerical semigroup generated by $5,6$ and $13$. Observe that $T=\mathbb{N}\backslash\{1,2,3,4,7,8,9,13,14\}$. The $T$-stripe GNS in $\mathbb{N}^2$ is generated by $ ( 5, 0 ), ( 6, 0 ), \newline ( 13, 0 ), (0, 5), ( 0, 6 ),( 0, 13 ), ( 4, 1 ), ( 3, 2 ), ( 2, 3 ), ( 1, 4 ), ( 5, 1 ), ( 4, 2 ), ( 3, 3 ), ( 2, 4 ), ( 1, 5 ), ( 6, 1 ), \newline ( 5, 2 ),  ( 4, 3 ), ( 3, 4 ), ( 2, 5 ), ( 1, 6 ), ( 7, 1 ), ( 6, 2 ), ( 5, 3 ), ( 4, 4 ), ( 3, 5 ), ( 2, 6 ), ( 1, 7 ),  ( 8, 1 ), ( 7, 2 ), \newline ( 6, 3 ), ( 5, 4 ),  ( 4, 5 ), ( 3, 6 ), ( 2, 7 ), ( 1, 8 )$.\\ In Figure \ref{Figura: esempio T,m-stripe} we provide a graphical view of $S$: the red points are the holes of $S$, the grey ones are the minimal generators. The blue points represent other elements of $S$.
	\begin{figure}[h]
		\centering
		\includegraphics[scale=0.75]{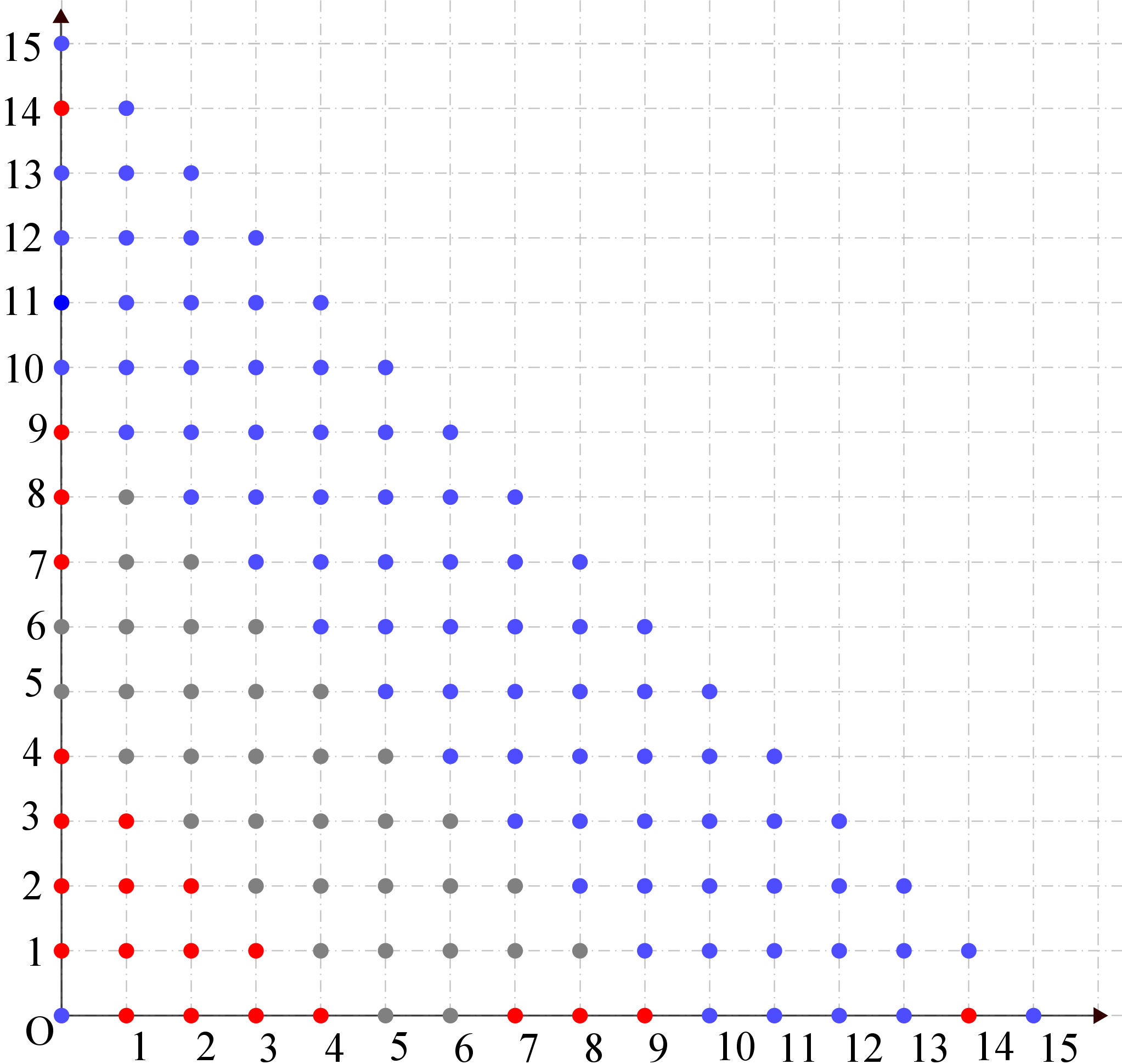}
		\caption{}
		\label{Figura: esempio T,m-stripe}
	\end{figure}
\end{example}

Now we want to study the generalized Wilf's conjecture for $T$-stripe GNSs, proving that for all numerical semigroups $T$ and for all $d\geq 2$ the $T$-stripe GNS $S$ satisfies the inequality $\operatorname{e}(S)\geq d(\operatorname{t}(S)+1)$.\\ 
First of all set $B_d(m)=\binom{m+d-1}{d-1}$, in order to simplify our notations. Applying repeatedly the known equality $\binom{n}{k}=\binom{n}{k-1}+\binom{n-1}{k-1}$, it is easy to obtain that $B_d(m)=\sum_{i=0}^{m}B_{d-1}(i)$.\\
Let $T$ be a numerical semigroup having multiplicity $m$ and $S$ be the $T$-stripe GNS in $\mathbb{N}^d$, $d\geq 2$. We denote by $e$ and $t$ respectively the embedding dimension and the type of $T$. From the previous considerations and from Proposition \ref{Prop: Type T,m Stripe} and \ref{Prop: generatori T,m Stripe} it follows that the inequality $\operatorname{e}(S)\geq d(\operatorname{t}(S)+1)$ is equivalent to the following 
$$ B_{d+1}(2m-1)-(d+1)B_{d+1}(m-1)\geq \Theta_d$$
where $\Theta_d:=d[m-e-d(m-1-t)]$. \\
Our aim is to prove the previous inequality for every choice of the numerical semigroup $T$ and for all $d\in \mathbb{N}$ with $d\geq 2$.

\begin{remark} \rm
Consider the numerical semigroup $T=\mathbb{N}\setminus \{1,\ldots,m-1\}$. It is well known that $\operatorname{e}(T)=m$. The $T$-stripe GNS $S$ in $\mathbb{N}^d$ has $n(S)=1$, so by \cite[Theorem 5.7]{cisto2020generalization} it follows that $S$ satisfies the generalized Wilf's conjecture, hence that $\operatorname{e}(S)\geq d\operatorname{c}(S)$. In particular $\sum_{i=m}^{2m-1}\binom{i+d-1}{d-1}\geq d\sum_{j=0}^{m-1}\binom{j+d-1}{d-1}$, equivalently 
$B_{d+1}(2m-1)-(d+1)B_{d+1}(m-1)\geq 0$ for $m\geq 2$ and $d\geq 2$. This means that the left hand side of the inequality, which we want to prove, is always positive. 
\label{Remark-left}
\end{remark}

%

	We recall that for a numerical semigroup $T$ we have $\operatorname{e}(T)\leq \operatorname{m}(T)$ and $\operatorname{t}(T)\leq \operatorname{m}(T)-1$. Moreover $\operatorname{e}(T)=\operatorname{m}(T)$ if and only if $\operatorname{t}(T)=\operatorname{m}(T)-1$ and in such a case $T$ is said to have \emph{maximal embedding dimension} (see \cite[Corollary 3.2]{rosales2009numerical}).
	
\begin{lemma}\label{Lemma: Condizioni sufficiente A_d<0}
	Let $T$ be a numerical semigroup. Suppose that one of the following conditions holds:
	\begin{enumerate}
	\item $\operatorname{e}(T)\in\{2,3\}$;
	\item $T$ is a numerical semigroup with maximal embedding dimension; 
	\item $\operatorname{m}(T)=2$.
	\end{enumerate}
 	Then the $T$-stripe GNS $S$ satisfies $\operatorname{e}(S)\geq d(\operatorname{t}(S)+1)$.
\end{lemma}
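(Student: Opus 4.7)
The plan is to combine the reformulation of the target inequality derived just above the lemma with Remark~\ref{Remark-left}. That reformulation asserts $\operatorname{e}(S)\geq d(\operatorname{t}(S)+1)$ is equivalent to
$$B_{d+1}(2m-1)-(d+1)B_{d+1}(m-1)\geq \Theta_d,$$
where $\Theta_d=d[m-e-d(m-1-t)]$, and by Remark~\ref{Remark-left} the left-hand side is non-negative for every $m\geq 2$ and $d\geq 2$. Hence it is enough to show $\Theta_d\leq 0$ in each of the three listed cases; equivalently, to show $m-e\leq d(m-1-t)$.

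Cases (2) and (3) collapse to essentially the same computation. If $T$ has maximal embedding dimension, then $e=m$ and $t=m-1$, so both sides of $m-e\leq d(m-1-t)$ vanish. When $m=2$, every numerical semigroup $T$ is necessarily of the form $\langle 2,2k+1\rangle$, whence $e=m=2$ and $t=1=m-1$; this places case (3) under case (2), so again $\Theta_d=0$.

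Case (1) splits according to $e$ and relies on classical structural facts about numerical semigroups of small embedding dimension. If $e=2$, then $T$ is two-generated and therefore symmetric, so $t=1$ and
$$\Theta_d=d(m-2)(1-d),$$
which is non-positive for all $m\geq 2$ and $d\geq 2$. If $e=3$, the key input is the classical bound $t\leq 2$ for numerical semigroups of embedding dimension three; then $m\geq e=3$ and $t\leq 2$ give $m-1-t\geq m-3\geq 0$, and combining with $d\geq 2$ yields
$$d(m-1-t)\geq 2(m-3)\geq m-3=m-e,$$
as required. The main (mild) obstacle is the appeal to the type bound $t\leq 2$ in embedding dimension three; once that is in hand, the remaining work amounts to tracking signs and invoking Remark~\ref{Remark-left} to absorb the non-positive quantity $\Theta_d$.
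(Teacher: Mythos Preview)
Your proof is correct and follows essentially the same approach as the paper: reduce to showing $\Theta_d\leq 0$ in each case and then invoke Remark~\ref{Remark-left}. The only cosmetic differences are that the paper reduces case~(3) to case~(1) rather than to case~(2), and in the $e=3$ subcase the paper splits into $t=1$ and $t=2$ separately (citing \cite[Corollary~10.22]{rosales2009numerical} for $t\in\{1,2\}$) rather than handling both at once via the bound $t\leq 2$ as you do.
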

\begin{proof}
	$(1)$ Assume that $\operatorname{e}(T)=2$. Then $T$ is symmetric and by \cite[Corollary 10.22]{rosales2009numerical} $\operatorname{t}(T)=1$, so $\Theta_d=d(1-d)(m-2)\leq 0$. Assume $\operatorname{e}(T)=3$, then $\operatorname{t}(T)\in \{1,2\}$ by \cite[Corollary 10.22]{rosales2009numerical}. If $\operatorname{t}(T)=1$, then $\Theta_d=d[m-3-d(m-2)]\leq0$. If $\operatorname{t}(T)=2$, then $\Theta_d=d[m-3-d(m-3)]\leq0$. \\
	$(2)$ If $T$ is a numerical semigroup with maximal embedding dimension then $\operatorname{t}(T)=\operatorname{m}(T)-1$ and $\operatorname{e}(T)= \operatorname{m}(T)$, so $\Theta_d=0$.\\
	$(3)$ In this case $\operatorname{e}(T)=2$, and the assertion follows from (1).\\
	In all the previous cases we have $\operatorname{e}(S)\geq d(\operatorname{t}(S)+1)$ by Remark~\ref{Remark-left}.
\end{proof}

Now we need a refinement of the inequality in Remark~\ref{Remark-left}. 

\begin{lemma}\rm \label{Lemma: Disequaglianza stripe con l'ordinario}
    Let $m\geq 3$ and $d\geq 2$ be two integers. Then $$B_{d+1}(2m-1)-(d+1)B_{d+1}(m-1)\geq d(d-1).$$
\end{lemma}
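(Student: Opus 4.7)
The plan is to replace the left-hand side with an explicit non-negative sum by a single application of Vandermonde's identity, and then to pick off one term of that sum as a lower bound.

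First I would write $B_{d+1}(2m-1)=\binom{2m+d-1}{d}$ and expand via Vandermonde:
\[
\binom{(m+d-1)+m}{d}=\sum_{k=0}^{d}\binom{m+d-1}{k}\binom{m}{d-k}.
\]
The two top-index terms $k=d$ and $k=d-1$ contribute $\binom{m+d-1}{d}+m\binom{m+d-1}{d-1}$. Because $d\binom{m+d-1}{d}=m\binom{m+d-1}{d-1}$, this sum equals $(d+1)\binom{m+d-1}{d}=(d+1)B_{d+1}(m-1)$, which cancels the subtracted quantity exactly. I would therefore obtain the closed-form identity
\[
B_{d+1}(2m-1)-(d+1)B_{d+1}(m-1)=\sum_{k=0}^{d-2}\binom{m+d-1}{k}\binom{m}{d-k}.
\]

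Next I would bound this non-negative sum from below by its single $k=d-2$ term, which exists since $d\geq2$:
\[
B_{d+1}(2m-1)-(d+1)B_{d+1}(m-1)\geq \binom{m+d-1}{d-2}\binom{m}{2}.
\]
From $m\geq3$ I get $\binom{m}{2}\geq3$, and monotonicity in $m$ gives $\binom{m+d-1}{d-2}\geq\binom{d+2}{d-2}=\binom{d+2}{4}$. The product then simplifies to $\tfrac{(d+2)(d+1)d(d-1)}{8}$, which is at least $d(d-1)$ because $(d+2)(d+1)\geq 12\geq 8$ for every $d\geq2$.

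The main obstacle is largely cosmetic: spotting the Vandermonde reorganization and the fortunate cancellation of the $k=d-1,d$ terms with the entire subtracted quantity $(d+1)B_{d+1}(m-1)$. Once that identity is in hand the remaining estimate is a routine polynomial inequality, and the hypothesis $m\geq3$ enters only through the bound $\binom{m}{2}\geq 3$, which is tight precisely at $m=3$.
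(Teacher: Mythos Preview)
Your argument is correct. The Vandermonde expansion of $\binom{(m+d-1)+m}{d}$ is valid, the identity $d\binom{m+d-1}{d}=m\binom{m+d-1}{d-1}$ is a standard committee-chair manipulation, and the two top terms do collapse to $(d+1)B_{d+1}(m-1)$, giving
\[
B_{d+1}(2m-1)-(d+1)B_{d+1}(m-1)=\sum_{k=0}^{d-2}\binom{m+d-1}{k}\binom{m}{d-k}.
\]
Bounding by the single term $k=d-2$ and using $m\ge3$, $d\ge2$ then finishes as you describe; the final inequality $(d+2)(d+1)\ge8$ is clear.

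Your route, however, is quite different from the paper's. The paper does not touch binomial identities at all: it interprets the left-hand side as $\operatorname{e}(S)-d\operatorname{c}(S)$ for the $T$-stripe GNS $S$ with $T=\mathbb{N}\setminus\{1,\dots,m-1\}$, exhibits an explicit subset $M\subseteq\operatorname{G}(S)$ of size $d\operatorname{c}(S)$, and then displays $d(d-1)$ further generators $(m-1)\mathbf{e}_i+\mathbf{e}_j$ (with $i\neq j$) lying outside $M$. Your approach is more elementary and self-contained, requires no reference to the semigroup structure, and produces as a by-product a neat closed-form identity for the difference. The paper's approach, by contrast, stays within the semigroup framework developed in the surrounding sections and makes transparent \emph{which} generators account for the excess $d(d-1)$, connecting to the discussion of $\operatorname{e}(S)-d\operatorname{c}(S)$ in the remark that follows.
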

\begin{proof} 
	 
	Let $T=\mathbb{N}\setminus \{1,2,\ldots, m-1\}$ and $S$ the $T$-stripe GNS in $\mathbb{N}^d$ as in Remark~\ref{Remark-left}. We prove that $\operatorname{e}(S)-d\operatorname{c}(S)\geq d(d-1)$, then the conclusion follows arguing as in Remark~\ref{Remark-left}. Let $M=\{m\mathbf{e}_i+\mathbf{h}\mid \mathbf{h}\in \operatorname{H}(S)\cup \{\mathbf{0}\}, i\in [d]\}$ and observe that $|M|=d \operatorname{c}(S)$. Moreover $M\subseteq \operatorname{G}(S)$, so $\operatorname{e}(S)-d\operatorname{c}(S)=|\operatorname{G}(S)\setminus M|$. Consider the set $A=\{(m-1)\mathbf{e}_i+\mathbf{e}_j\mid i,j\in [d], i\neq j\}$ and observe that $A\subseteq \operatorname{G}(S)$ and $A\cap M=\emptyset$. Furthermore $|A|=d(d-1)$, so we obtain our claim. 
\end{proof} 


\begin{remark}
From the proof of the previous lemma it follows that if $S$ is the $T$-stripe GNS in $\mathbb{N}^d$ with $T=\mathbb{N}\setminus \{1,2,\ldots, m-1\}$, $m\geq 2$, then we can estimate a lower bound for the difference $\operatorname{e}(S)-d\operatorname{c}(S)$. Such a proof is inspired by some arguments contained in the proof of \cite[Proposition 8.1]{cisto2020generalization}. In the case $d=2$ indeed we obtain a class of GNSs for which we can apply directly \cite[Proposition 8.1]{cisto2020generalization}, obtaining the exact value of $\operatorname{e}(S)-2\operatorname{c}(S)$. In the framework of \cite[Section 5]{cisto2020generalization}, since $\operatorname{n}(S)=1$, then $S$ is a monomial semigroup with correspondent ideal $I=(x_1^{n_1}\cdots x_d^{n_d}\mid (n_1,\ldots,n_d)\in S\setminus \{\mathbf{0}\})$ in $R=K[x_1,\ldots,x_d]$, $K$ a field. Consider also the ideal $J=(x_1^{a_1},\ldots,x_d^{a_d})$ where $a_1,\ldots,a_d$ are the smallest integers such that $x_i^{a_i}\in I$ for each $i\in[d]$. It is easy to see the $a_i=m$ for all $i\in[d]$. To apply \cite[Proposition 8.1]{cisto2020generalization} we have to verify that $I^2=IJ$. Observe that $IJ\subseteq I^2$. The converse is true if and only if for all $\bm{\alpha}=(\alpha_{1},\ldots,\alpha_{d}),\bm{\beta}=(\beta_{1},\ldots,\beta_{d})\in \mathbb{N}^d$ such that $|\bm{\alpha}|\geq m$ and $|\bm{\beta}|\geq m$ the equality
$\bm{\alpha}+\bm{\beta}=\bm{\gamma}+m\mathbf{e}_i$ holds for some $i\in[d]$ and $\bm{\gamma}\in \mathbb{N}^d$, equivalently there exists $i\in [d]$ such that $\alpha_i+\beta_i\geq m$. It is not difficult to see that this is true if $d=2$, not for $d>2$. In fact if $d=3$, $m=7$, $\bm{\alpha}=5\mathbf{e}_1+2\mathbf{e}_3$ and $\bm{\beta}=5\mathbf{e}_2+2\mathbf{e}_3$, it is not $\alpha_i+\beta_i\geq 7$ for any $i\in [3]$. So for $T=\mathbb{N}\setminus \{1,2,\ldots, m-1\}$, $m\in \mathbb{N}$, $S$ the $T$-stripe in $\mathbb{N}^2$, we obtain $\operatorname{e}(S)-d\operatorname{c}(S)=m^2-\operatorname{c}(S)$.
\end{remark}

\begin{lemma}\label{Lemma: la disuguaglianza  maggiore di d(m-6)}
	Let $m\geq 7$, $d\geq 2$ be two integers. Then 
	$$B_{d+1}(2m-1)-(d+1)B_{d+1}(m-1)\geq d(m-6).$$
\end{lemma}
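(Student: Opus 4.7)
The plan is to sharpen the counting argument of Lemma~\ref{Lemma: Disequaglianza stripe con l'ordinario}. Fix $T=\mathbb{N}\setminus\{1,2,\ldots,m-1\}$ and let $S$ be the corresponding $T$-stripe in $\mathbb{N}^d$, as in Remark~\ref{Remark-left}. Using Propositions~\ref{Prop: Type T,m Stripe} and~\ref{Prop: generatori T,m Stripe} together with $\operatorname{e}(T)=m$, $\operatorname{H}(S)=\operatorname{H}_0$ and $\operatorname{n}(S)=1$, a direct computation gives
\[
\operatorname{e}(S)-d\operatorname{c}(S) = B_{d+1}(2m-1) - (d+1)B_{d+1}(m-1).
\]
Setting $M=\{m\mathbf{e}_i+\mathbf{h}\mid i\in[d],\ \mathbf{h}\in\operatorname{H}(S)\cup\{\mathbf{0}\}\}$ (the same auxiliary set as in the proof of Lemma~\ref{Lemma: Disequaglianza stripe con l'ordinario}), one has $M\subseteq\operatorname{G}(S)$ and $|M|=d\operatorname{c}(S)$, so it suffices to exhibit $d(m-6)$ elements of $\operatorname{G}(S)\setminus M$.

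The main batch of candidates lives in the slice $|\mathbf{x}|=m$: for each pair $1\le i<j\le d$ and each $k\in\{1,\ldots,m-1\}$, put $\mathbf{u}_{i,j,k}=(m-k)\mathbf{e}_i+k\mathbf{e}_j$. Each such vector has $1$-norm $m$ and two nonzero coordinates, so it lies in $A_0\subseteq\operatorname{G}(S)$. If $\mathbf{u}_{i,j,k}=m\mathbf{e}_\ell+\mathbf{h}$ with $\mathbf{h}\in\mathbb{N}^d$, then comparing $1$-norms forces $\mathbf{h}=\mathbf{0}$, placing $\mathbf{u}_{i,j,k}$ on an axis, a contradiction; hence $\mathbf{u}_{i,j,k}\notin M$. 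Under the constraint $i<j$ the triples yield distinct vectors (supports and coordinate values determine $(i,j,k)$), so we obtain $\binom{d}{2}(m-1)$ elements of $\operatorname{G}(S)\setminus M$. For $d\ge 3$ this already suffices: the inequality $\binom{d}{2}(m-1)\ge d(m-6)$ is equivalent to $(d-3)m+13-d\ge 0$, which is evident for $d=3$ (value $10$) and clear for $d\ge 4$ since $m\ge 7$ forces $(d-3)m\ge 7(d-3)\ge d-13$.

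The genuine obstacle is the case $d=2$, where the above count collapses to $m-1$ and fails once $m\ge 12$. I would patch this by including the adjacent slice $|\mathbf{x}|=m+1$: define $\mathbf{v}_k=(m+1-k)\mathbf{e}_1+k\mathbf{e}_2$ for $k\in\{2,\ldots,m-1\}$. Writing $\mathbf{v}_k=m\mathbf{e}_\ell+\mathbf{h}$ yields a negative entry if $\ell=1$ (since $k\ge 2$) and if $\ell=2$ (since $k\le m-1$), so $\mathbf{v}_k\notin M$; membership $\mathbf{v}_k\in A_0$ is immediate. These $m-2$ additional vectors are disjoint from the previous $m-1$ vectors of norm $m$, producing $(m-1)+(m-2)=2m-3\ge 2(m-6)$ generators outside $M$, and closing the argument in dimension $2$.
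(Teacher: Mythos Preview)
Your proof is correct, and it takes a genuinely different route from the paper's. The paper fixes $d$, defines the sequence $a_m=B_{d+1}(2m-1)-(d+1)B_{d+1}(m-1)-d(m-6)$, and shows it is nondecreasing in $m$ via the recursion $B_{d+1}(n)-B_{d+1}(n-1)=B_d(n)$, reducing matters to checking $a_7\ge 0$ (which is the content of Lemma~\ref{Lemma: Disequaglianza stripe con l'ordinario} at $m=7$). Your argument instead refines the counting in Lemma~\ref{Lemma: Disequaglianza stripe con l'ordinario} directly: rather than exhibiting only the $d(d-1)$ off-axis generators of the form $(m-1)\mathbf{e}_i+\mathbf{e}_j$, you count all $\binom{d}{2}(m-1)$ two-support vectors in the slice $|\mathbf{x}|=m$, and for $d=2$ you also harvest the slice $|\mathbf{x}|=m+1$ (correctly excluding $k=1$ and $k=m$, which do lie in $M$). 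This is a clean elementary strengthening that bypasses the induction on $m$ and the binomial recursion entirely; the cost is the small case split $d\ge 3$ versus $d=2$, whereas the paper's argument is uniform in $d$ once the monotonicity is in hand. Both proofs ultimately lean on the same translation $\operatorname{e}(S)-d\operatorname{c}(S)=B_{d+1}(2m-1)-(d+1)B_{d+1}(m-1)$ from Remark~\ref{Remark-left}.
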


\begin{proof}
	Fix $d\geq 2$. Define the sequence $a_m=B_{d+1}(2m-1)-(d+1)B_{d+1}(m-1)-d(m-6)$ for all $m\geq 7$. In order to obtain our claim it suffices to prove that the sequence $\{a_m\}_{m\geq7}$ is increasing and $a_7\geq 0$.\\
	Firstly we show that $\{a_m\}_{m\geq7}$ is increasing. For all $m\geq 7$ we have to prove that $a_{m+1}\geq a_m$, that is
 $$	B_{d+1}(2m+1)-(d+1)B_{d+1}(m)-d(m-5)\geq B_{d+1}(2m-1)-(d+1)B_{d+1}(m-1)-d(m-6)$$
	Since $B_d(m)=\sum_{i=0}^{m}B_{d-1}(i)$, we obtain that $B_{d+1}(2m+1)-B_{d+1}(2m-1)=B_{d}(2m+1)+B_d(2m)$ and $B_{d+1}(m)-B_{d+1}(m-1)=B_{d}(m)$, so we have the following equivalent inequality $$B_{d}(2m+1)+B_d(2m)-(d+1)B_d(m)-d\geq 0$$ 
	It is trivially true for $d=2$. For $d=3$ it is equivalent to $5m^2+m-8\geq 0$ that is also true for $m\geq 7$. For $d\geq 4$ we can rewrite the left hand side of the previous inequality as $B_{d}(2m+1)-dB_d(m)-d+B_d(2m)-B_d(m)$. By Lemma~\ref{Lemma: Disequaglianza stripe con l'ordinario}, we have $B_{d}(2m+1)-dB_d(m)\geq (d-1)(d-2)$, obtained considering $d-1$ instead of $d$ and $m+1$ instead of $m$. Moreover, since $d\geq 4$, we have also $(d-1)(d-2)\geq d$ so $B_{d}(2m+1)-dB_d(m)-d\geq 0$. Moreover for $m\geq 7$ and $d\geq 4$ then $B_d(2m)-B_d(m)\geq 0$. Hence we have the first desired claim. It remains to prove that $a_7\geq 0$, that is $B_{d+1}(13)-(d+1)B_{d+1}(6)\geq d$, but this follows from Lemma~\ref{Lemma: Disequaglianza stripe con l'ordinario} for $m=7$. 
\end{proof}


Now we can state the following general result.

\begin{theorem}
	Let $T$ be a numerical semigroup and $S$ be the $T$-stripe GNS in $\mathbb{N}^d$, $d\geq 2$. Then $\operatorname{e}(S)\geq d(\operatorname{t}(S)+1)$. In particular $S$ satisfies generalized Wilf's conjecture.
\end{theorem}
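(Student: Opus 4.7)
The plan is to use the equivalent reformulation already set up before the theorem, namely
$$B_{d+1}(2m-1) - (d+1)B_{d+1}(m-1) \geq \Theta_d, \qquad \Theta_d = d\bigl[m - e - d(m-1-t)\bigr],$$
where $m = \operatorname{m}(T)$, $e = \operatorname{e}(T)$ and $t = \operatorname{t}(T)$. By Lemma~\ref{Lemma: Condizioni sufficiente A_d<0} I may assume from the outset that $e \geq 4$, that $T$ is not of maximal embedding dimension, and that $m \geq 3$ (in fact $m \geq 4$, since $e \leq m$). The hypothesis that $T$ does not have maximal embedding dimension gives $t \leq m-2$, so $m - 1 - t \geq 1$, and this immediately yields the crucial upper bound
$$\Theta_d \leq d(m - e - d).$$

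Next I would split the analysis according to the sign of $m - e - d$. If $m \leq e + d$, the displayed bound gives $\Theta_d \leq 0$, and Remark~\ref{Remark-left} already guarantees that the left-hand side of the target inequality is $\geq 0$, so we are done. If instead $m \geq e + d + 1$, then since $e \geq 4$ and $d \geq 2$ this forces $m \geq 7$, so Lemma~\ref{Lemma: la disuguaglianza  maggiore di d(m-6)} applies and provides the left-hand side $\geq d(m-6)$. It then suffices to check $d(m - e - d) \leq d(m - 6)$, which is equivalent to $e + d \geq 6$, and this is immediate from $e \geq 4$ and $d \geq 2$.

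The main obstacle in this strategy is not the case split itself but the refined quantitative bound $d(m-6)$ in Lemma~\ref{Lemma: la disuguaglianza  maggiore di d(m-6)}; once that is in hand, the remaining work is the bookkeeping above, which pairs an upper bound on $\Theta_d$ (as a function of $m,e,d,t$, tightened by $m-1-t \geq 1$ and $e \geq 4$) with a matching lower bound on the binomial expression (as a function of $m,d$). The final claim that $S$ satisfies the generalized Wilf's conjecture then follows directly from Corollary~\ref{wilfType}.
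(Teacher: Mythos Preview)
Your proposal is correct and follows essentially the same route as the paper: both arguments reduce via Lemma~\ref{Lemma: Condizioni sufficiente A_d<0} to the case $e\geq 4$ and $t\leq m-2$, bound $\Theta_d$ above using $m-1-t\geq 1$, and then combine Remark~\ref{Remark-left} (for the small-$m$ regime) with Lemma~\ref{Lemma: la disuguaglianza  maggiore di d(m-6)} (for $m\geq 7$). The only cosmetic difference is that the paper substitutes $e\geq 4$ immediately to reach $\Theta_d\leq d(m-6)$ and then splits at $m=6$, whereas you keep $e$ in the bound $\Theta_d\leq d(m-e-d)$ and split at $m=e+d$; the two organizations are equivalent.
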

\begin{proof}
By Lemma~\ref{Lemma: Condizioni sufficiente A_d<0} we can consider $\operatorname{e}(T)\geq 4$ and $\operatorname{t}(T)\leq \operatorname{m}(T)-2$. Under these assumptions, we have $\Theta_d\leq d[m-4-d(m-1)+dt]\leq d[m-4-d(m-1)+d(m-2)]\leq d(m-4-d)\leq d(m-6)$. So, in:
$$B_{d+1}(2m-1)-(d+1)B_{d+1}(m-1)\geq d(m-6)\geq \Theta_d.$$ the second inequality is true by our assumption and the left hand side of the first inequality is always positive by Remark~\ref{Remark-left}. So the first inequality is trivially true for $m\leq 6$, for $m\geq 7$ by Lemma~\ref{Lemma: la disuguaglianza  maggiore di d(m-6)}. So it is always $B_{d+1}(2m-1)-(d+1)B_{d+1}(m-1)\geq \Theta_d$ and this concludes our proof.
\end{proof}

\section{T-graded GNSs}
We introduce another class of generalized numerical semigroups related to a numerical semigroup $T$. In this section we denote by $G_i$ the set $G_i=\{\mathbf{x}\in \mathbb{N}^d \mid |\mathbf{x}|=i\}$. It is well known that $|G_i|=\binom{i+d-1}{d-1}$.\\
Let $T$ be a numerical semigroup. We put $S=\{\mathbf{a}\in \mathbb{N}^d \mid |\mathbf{a}|\in T\}$. It is easy to prove that $S$ is a generalized numerical semigroup in $\mathbb{N}^{d}$.

\begin{definition} \rm  Let $T$ be a numerical semigroup. We call $S=\{\mathbf{a}\in \mathbb{N}^d \mid |\mathbf{a}|\in T\}$ a $T$-graded GNS.
\end{definition}

\begin{remark} 
If $S\subseteq \mathbb{N}^{d}$ is a $T$-graded GNS then 
$$\operatorname{H}(S)=\{\mathbf{a}\in \mathbb{N}^d \mid |\mathbf{a}|\notin T\}=\bigcup_{i\in \operatorname{H}(T)}G_i\ \ \ \mbox{and}\ \ \ \operatorname{g}(S)=\sum_{i\in \operatorname{H}(T)}\binom{i+d-1}{d-1}$$
 Moreover if $\mathbf{x}\in \mathbb{N}^{d}$ then $\mathbf{x}\in S$ if and only if $|\mathbf{x}|\in T$.
\end{remark}

\begin{theorem} Let $T$ be a numerical semigroup such that $\operatorname{G}(T)=\{n_{1},\ldots,n_{r}\}$, and let $S\subseteq \mathbb{N}^{d}$ be the $T$-graded GNS. 
Then $\operatorname{G}(S)=\bigcup_{i=1}^{r}G_{n_{i}}$.
\label{teorstripe}
\end{theorem}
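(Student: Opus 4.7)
The plan is to prove the two inclusions $\bigcup_{i=1}^r G_{n_i} \subseteq \operatorname{G}(S)$ and $\operatorname{G}(S) \subseteq \bigcup_{i=1}^r G_{n_i}$ separately, with the key tool being Lemma~\ref{numeri} to translate the additive decompositions inside $T$ into componentwise decompositions inside $\mathbb{N}^d$.

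For the inclusion $\bigcup_{i=1}^r G_{n_i} \subseteq \operatorname{G}(S)$, I would take $\mathbf{x} \in G_{n_i}$ for some $i \in [r]$. Since $n_i \in T$, we have $\mathbf{x} \in S$. Suppose for contradiction that $\mathbf{x} = \mathbf{y} + \mathbf{z}$ with $\mathbf{y}, \mathbf{z} \in S \setminus \{\mathbf{0}\}$. Then $|\mathbf{y}|, |\mathbf{z}| \in T \setminus \{0\}$ and $n_i = |\mathbf{x}| = |\mathbf{y}| + |\mathbf{z}|$, contradicting the fact that $n_i \in \operatorname{G}(T)$ is a minimal generator of $T$. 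Hence $\mathbf{x}$ cannot be decomposed into a sum of two nonzero elements of $S$, so $\mathbf{x} \in \operatorname{G}(S)$.

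For the inclusion $\operatorname{G}(S) \subseteq \bigcup_{i=1}^r G_{n_i}$, I would in fact prove the stronger statement that every $\mathbf{x} \in S \setminus \{\mathbf{0}\}$ is a sum of elements of $\bigcup_{i=1}^r G_{n_i}$, by induction on $|\mathbf{x}|$. Given $\mathbf{x} \in S \setminus \{\mathbf{0}\}$, we have $|\mathbf{x}| \in T \setminus \{0\}$, so there exists $n_j \in \operatorname{G}(T)$ with $n_j \leq |\mathbf{x}|$ and $|\mathbf{x}| - n_j \in T$. If $|\mathbf{x}| = n_j$, then $\mathbf{x} \in G_{n_j}$ and we are done. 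Otherwise $|\mathbf{x}| > n_j$, and after sorting the components of $\mathbf{x}$ in nonincreasing order Lemma~\ref{numeri} (with $g = n_j$) yields $\mathbf{y} \in \mathbb{N}^d$ with $\mathbf{y} \leq \mathbf{x}$ and $|\mathbf{y}| = n_j$. Then $\mathbf{y} \in G_{n_j}$ and $\mathbf{x} - \mathbf{y} \in S$ (because $|\mathbf{x} - \mathbf{y}| = |\mathbf{x}| - n_j \in T$) with $|\mathbf{x} - \mathbf{y}| < |\mathbf{x}|$. By the inductive hypothesis, $\mathbf{x} - \mathbf{y}$ is a sum of elements of $\bigcup_{i=1}^r G_{n_i}$, and adding $\mathbf{y}$ gives the same for $\mathbf{x}$. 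Hence $\bigcup_{i=1}^r G_{n_i}$ generates $S$, and combined with the first inclusion we obtain $\operatorname{G}(S) = \bigcup_{i=1}^r G_{n_i}$.

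The only nontrivial point in this argument is the extraction of a summand of prescribed weight $n_j$ from $\mathbf{x}$, and this is precisely handled by Lemma~\ref{numeri}; without it one would have to reproduce a combinatorial argument to realize the scalar decomposition $|\mathbf{x}| = n_j + (|\mathbf{x}| - n_j)$ at the level of vectors. Everything else reduces to the defining property $\mathbf{x} \in S \iff |\mathbf{x}| \in T$ and the minimality of $n_1, \dots, n_r$ in $T$.
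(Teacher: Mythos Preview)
Your proof is correct and follows essentially the same approach as the paper: both arguments use Lemma~\ref{numeri} to peel off from $\mathbf{x}$ a vector of weight $n_j$ lying in $G_{n_j}$, with the remainder still in $S$ (since its weight lies in $T$), and then iterate/induct on $|\mathbf{x}|$; minimality of the elements of $\bigcup_i G_{n_i}$ is deduced in both cases from the fact that a decomposition $\mathbf{x}=\mathbf{y}+\mathbf{z}$ in $S$ would force $n_i=|\mathbf{y}|+|\mathbf{z}|$ to be a sum of two nonzero elements of $T$. Your explicit mention of sorting the components before invoking Lemma~\ref{numeri} is a nice touch that the paper leaves implicit.
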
 

\begin{proof}
Denote $G=\bigcup_{i=1}^{r}G_{n_{i}}$. We prove that any $\mathbf{a}\in S$ can be written as sum of elements in $G$. In particular $|\mathbf{a}|=\sum_{j=1}^{r}\lambda_{j}n_{j}$, with $\lambda_{j}\in \mathbb{N}$ for every $j=1,\ldots,r$. If $|\mathbf{a}|=n_{k}$ for some $k\in \{1,\ldots,r\}$ there is nothing to prove. Otherwise there exists $k\in \{1,\ldots,r\}$ such that  $|\mathbf{a}|>n_{k}$ and $\lambda_{k}>0$. In such a case, by Lemma~\ref{numeri}, there exists $\mathbf{b}\in \mathbb{N}^{d}$ such that $|\mathbf{b}|=n_{k}$ and $\mathbf{a}-\mathbf{b}\in \mathbb{N}^{d}$. So there exists $\mathbf{c}\in \mathbb{N}^{d}$ such that $\mathbf{a}=\mathbf{b}+\mathbf{c}$, moreover $|\mathbf{c}|=\sum_{j\neq k}\lambda_{j}n_{j}+(\lambda_{k}-1)n_{k}$, in particular $\mathbf{c}\in S$. Now, if $|\mathbf{c}|=n_{h}$ for some $h\in \{1,\ldots,r\}$ then $\mathbf{c}\in G$, otherwise there exists $h\in \{1,\ldots,r\}$ such that $|\mathbf{c}|>n_{h}$ and we can apply the same argument to $\mathbf{c}$. The computation stops after a finite number of steps because $|\mathbf{c}|< |\mathbf{a}|$. So every element of $S$ can be expressed as a sum of elements in $G$, that is $G$ is a set of generators for $S$. \\
Finally, if $\mathbf{a},\mathbf{b}\in G$ then the sum of coordinates of $\mathbf{a}+\mathbf{b}$ is sum of at least two elements in $T$, hence $|\mathbf{a}+\mathbf{b}|$ is not a minimal generator of $T$, that is $\mathbf{a}+\mathbf{b}\notin G$. So the set of generators $G$ is minimal for $S$.
  
\end{proof}

\begin{example} 

Let $T=\langle 4,6,7\rangle=\mathbb{N}\setminus \{1,2,3,5,9\}$ and let $S$ be the $T$-graded generalized numerical semigroup in $\mathbb{N}^{2}$. Then $S$ is generated by the set $G=\{(4,0),(3,1),(2,2),(1,3),\newline (0,4),(6,0),(5,1),(4,2),(3,3),(2,4),(1,5),(0,6),(7,0),(6,1),(5,2),(4,3),(3,4),(2,5),\newline(1,6),(0,7)\}$, and $\operatorname{H}(S)=\{(1,0),(0,1),(2,0),(1,1),(0,2),(3,0),(2,1),(1,2),(0,3),(5,0),\newline(4,1),(3,2),(2,3),(1,4),(0,5),(9,0),(8,1),(7,2),(6,3),(5,4),(4,5),(3,6),(2,7),(1,8),(0,9)\}$.

\vspace{12pt}

\begin{figure}[h!]
\begin{center}
\begin{tikzpicture}[scale=0.8] 
\usetikzlibrary{patterns}
\draw [help lines] (0,0) grid (10,10);
\draw [<->] (0,11) node [left] {$y$} -- (0,0)
-- (11,0) node [below] {$x$};
\foreach \i in {1,...,10}
\draw (\i,1mm) -- (\i,-1mm) node [below] {$\i$} 
(1mm,\i) -- (-1mm,\i) node [left] {$\i$}; 
\node [below left] at (0,0) {$O$};

\draw [mark=*] plot (0,1);
\draw [mark=*] plot (1,0);
\draw [mark=*] plot (0,2); 
\draw [mark=*] plot (1,1);
\draw [mark=*] plot (2,0);
\draw [mark=*] plot (3,0);
\draw [mark=*] plot (2,1);
\draw [mark=*] plot (1,2);
\draw [mark=*] plot (0,3);
\draw [mark=*] plot (5,0);
\draw [mark=*] plot (4,1);
\draw [mark=*] plot (3,2);
\draw [mark=*] plot (2,3);
\draw [mark=*] plot (1,4);
\draw [mark=*] plot (0,5);
\draw [mark=*] plot (9,0);
\draw [mark=*] plot (8,1);
\draw [mark=*] plot (7,2);
\draw [mark=*] plot (6,3);
\draw [mark=*] plot (5,4);
\draw [mark=*] plot (4,5);
\draw [mark=*] plot (3,6);
\draw [mark=*] plot (2,7);
\draw [mark=*] plot (1,8);
\draw [mark=*] plot (0,9);

\draw [red, mark=*] plot (4,0);
\draw [red, mark=*] plot (3,1);
\draw [red, mark=*] plot (2,2);
\draw [red, mark=*] plot (1,3);
\draw [red, mark=*] plot (0,4);
\draw [red, mark=*] plot (0,6);
\draw [red, mark=*] plot (1,5);
\draw [red, mark=*] plot (2,4);
\draw [red, mark=*] plot (3,3);
\draw [red, mark=*] plot (4,2);
\draw [red, mark=*] plot (5,1);
\draw [red, mark=*] plot (6,0);
\draw [red, mark=*] plot (7,0);
\draw [red, mark=*] plot (6,1);
\draw [red, mark=*] plot (5,2);
\draw [red, mark=*] plot (4,3);
\draw [red, mark=*] plot (3,4);
\draw [red, mark=*] plot (2,5);
\draw [red, mark=*] plot (1,6);
\draw [red, mark=*] plot (0,7);

\end{tikzpicture}
\end{center}
\caption{}
\label{figure-stripe}
\end{figure}

\noindent Figure~\ref{figure-stripe} provides a graphical view of S: black points are the holes of S, while the red points are the minimal generators. The other points are all elements in $S$.

\end{example}

\noindent Pseudo-Frobenius elements and special gaps are described in the following:   

\begin{proposition}
Let $T$ be a numerical semigroup and $S\subseteq \mathbb{N}^{d}$ be the $T$-graded GNS. Then:
\begin{enumerate}
\item $\operatorname{PF}(S)=\bigcup_{i\in \operatorname{PF}(T)}G_{i}$.
\item $\operatorname{SG}(S)=\bigcup_{i\in \operatorname{SG}(T)}G_{i}$.
\end{enumerate}
\label{pseudoFrobStripe}
\end{proposition}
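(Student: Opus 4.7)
The plan is to exploit the defining property of $T$-graded GNSs, namely $\mathbf{x}\in S \iff |\mathbf{x}|\in T$, to translate membership in $\operatorname{PF}(S)$ and $\operatorname{SG}(S)$ into conditions on the single integer $|\mathbf{x}|$ in the underlying semigroup $T$. Both parts should follow from short, symmetric arguments using double inclusion; no heavy machinery (not even Lemma~\ref{numeri}) should be needed, since we only need to translate the $+\mathbf{s}$ condition into a $+s$ condition and back.

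For part (1), for the inclusion $\bigcup_{i\in\operatorname{PF}(T)}G_i\subseteq\operatorname{PF}(S)$, I would take $\mathbf{x}\in G_i$ with $i\in\operatorname{PF}(T)$. Then $|\mathbf{x}|=i\notin T$ puts $\mathbf{x}\in \operatorname{H}(S)$, and for any $\mathbf{s}\in S\setminus\{\mathbf{0}\}$ the number $|\mathbf{s}|$ is a nonzero element of $T$, so $|\mathbf{x}+\mathbf{s}|=i+|\mathbf{s}|\in T$ by the pseudo-Frobenius property of $i$, giving $\mathbf{x}+\mathbf{s}\in S$. Conversely, for $\operatorname{PF}(S)\subseteq\bigcup_{i\in\operatorname{PF}(T)}G_i$, I would take $\mathbf{x}\in\operatorname{PF}(S)$, set $i=|\mathbf{x}|$ (so $i\in \operatorname{H}(T)$), and for any $s\in T\setminus\{0\}$ use the auxiliary element $s\mathbf{e}_1\in S\setminus\{\mathbf{0}\}$: since $\mathbf{x}+s\mathbf{e}_1\in S$, we get $i+s=|\mathbf{x}+s\mathbf{e}_1|\in T$, so $i\in\operatorname{PF}(T)$.

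Part (2) then follows almost immediately by combining (1) with the observation that $2\mathbf{x}\in S \iff 2|\mathbf{x}|\in T$. Indeed,
\begin{equation*}
\mathbf{x}\in\operatorname{SG}(S)\iff \mathbf{x}\in\operatorname{PF}(S)\text{ and }2\mathbf{x}\in S\iff |\mathbf{x}|\in\operatorname{PF}(T)\text{ and }2|\mathbf{x}|\in T\iff |\mathbf{x}|\in\operatorname{SG}(T),
\end{equation*}
which is exactly the claim $\operatorname{SG}(S)=\bigcup_{i\in\operatorname{SG}(T)}G_i$.

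I do not expect any serious obstacle here; the only subtle point is the $(\subseteq)$ direction of (1), where one must produce a suitable test element $\mathbf{s}\in S\setminus\{\mathbf{0}\}$ from an arbitrary $s\in T\setminus\{0\}$ in order to pull the $+s$ information back from $S$ to $T$. Using $s\mathbf{e}_1$ (or any coordinate axis representative) suffices, so this step is routine.
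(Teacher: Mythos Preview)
Your proposal is correct and follows essentially the same approach as the paper's proof: both directions of (1) use the defining equivalence $\mathbf{x}\in S\iff |\mathbf{x}|\in T$, with the converse handled by testing against axis elements $s\mathbf{e}_j$ (you use $s\mathbf{e}_1$, the paper uses an arbitrary $j$), and (2) is deduced from (1) via $2\mathbf{x}\in S\iff 2|\mathbf{x}|\in T$ exactly as you do.
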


\begin{proof}
1) Let $\mathbf{x}\in G_{i}$ with $i\in \operatorname{PF}(T)$, then $\mathbf{x}\in \operatorname{H}(S)$. If $\mathbf{s}\in S$, then $|\mathbf{x}+\mathbf{s}|=i+|\mathbf{s}| \in T$ since $|\mathbf{s}|\in T$, so $\mathbf{x}+\mathbf{s}\in S$. Conversely, let $\mathbf{x}\in \operatorname{PF}(S)$ and $i=|\mathbf{x}|$. In particular $\mathbf{x}\in G_{i}$ and $i\in \operatorname{H}(T)$. We prove that $i\in \operatorname{PF}(T)$. Let $t\in T \setminus \{0\}$, then $t\mathbf{e}_{j}\in S$ and $\mathbf{x}+t\mathbf{e}_{j}\in S$ for any $j \in \{1, \ldots, d\}$. This means that $|\mathbf{x}+t\mathbf{e}_{j}|\in T$, that is $i+t\in T$.\\
2) From 1) we know that $\mathbf{x}\in \operatorname{PF}(S)$ if and only if $|\mathbf{x}|\in \operatorname{PF}(T)$. So $\mathbf{x}\in \operatorname{SG}(S)\Leftrightarrow 2\mathbf{x}\in S \Leftrightarrow |2\mathbf{x}|\in T \Leftrightarrow 2|\mathbf{x}|\in T \Leftrightarrow |\mathbf{x}|\in \operatorname{SG}(T)$.   
\end{proof}

\begin{proposition}
	Let $T$ be a numerical semigroup and $S\subseteq \mathbb{N}^{d}$ be the $T$-graded GNS. 
	Then $\operatorname{n}(S)=\sum_{i\in \operatorname{N}(T)}|G_i|$.
	\label{trivial}
\end{proposition}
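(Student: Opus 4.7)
The plan is to show the set equality $\operatorname{N}(S)=\bigcup_{i\in \operatorname{N}(T)}G_i$, from which the cardinality formula follows immediately since the sets $G_i$ are pairwise disjoint for distinct values of $i$.

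The key observation I would exploit is the characterization inherited from the definition of a $T$-graded GNS: an element $\mathbf{x}\in\mathbb{N}^d$ lies in $S$ if and only if $|\mathbf{x}|\in T$, and lies in $\operatorname{H}(S)$ if and only if $|\mathbf{x}|\in \operatorname{H}(T)$. Together with the fact that the partial order $\mathbf{s}\leq \mathbf{h}$ implies $|\mathbf{s}|\leq|\mathbf{h}|$, this bridges the numerical-semigroup notion of $\operatorname{N}(T)$ and the GNS notion of $\operatorname{N}(S)$.

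First I would prove the inclusion $\operatorname{N}(S)\subseteq \bigcup_{i\in \operatorname{N}(T)}G_i$. Take $\mathbf{s}\in \operatorname{N}(S)$ and pick $\mathbf{h}\in \operatorname{H}(S)$ with $\mathbf{s}\leq \mathbf{h}$. Setting $i=|\mathbf{s}|$ one has $i\in T$, $|\mathbf{h}|\in \operatorname{H}(T)$ and $i\leq |\mathbf{h}|$, hence $i\in \operatorname{N}(T)$ and $\mathbf{s}\in G_i$.

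For the reverse inclusion $\bigcup_{i\in \operatorname{N}(T)}G_i\subseteq \operatorname{N}(S)$, given $\mathbf{s}\in G_i$ with $i\in \operatorname{N}(T)$, first note $\mathbf{s}\in S$ since $i\in T$. Now choose $h\in \operatorname{H}(T)$ with $i\leq h$, and set $\mathbf{h}=\mathbf{s}+(h-i)\mathbf{e}_1$. Then $|\mathbf{h}|=h\in \operatorname{H}(T)$, so $\mathbf{h}\in \operatorname{H}(S)$, and clearly $\mathbf{s}\leq \mathbf{h}$, which places $\mathbf{s}$ in $\operatorname{N}(S)$. Finally, since the $G_i$ are disjoint for distinct $i$, taking cardinalities yields $\operatorname{n}(S)=\sum_{i\in \operatorname{N}(T)}|G_i|$.

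There is no real obstacle here; the statement is essentially a bookkeeping consequence of the level-set description of $S$ and $\operatorname{H}(S)$. The only mildly nontrivial point is the backward inclusion, where one must explicitly manufacture a gap of $S$ dominating $\mathbf{s}$; the trick of adding a multiple of $\mathbf{e}_1$ is the shortest route and parallels the idea already used in the proof of Proposition~\ref{pseudoFrobStripe}.
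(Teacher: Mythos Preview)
Your argument is correct and is exactly the natural way to unpack the statement; the paper's own proof consists of the single word ``Trivial.'' Your set equality $\operatorname{N}(S)=\bigcup_{i\in\operatorname{N}(T)}G_i$ together with the disjointness of the $G_i$ is precisely what makes the result trivial, so there is nothing to add.
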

\begin{proof}
	Trivial.
\end{proof}

Observe that if $m\geq 2$ is an integer and $T=\mathbb{N}\setminus \{1,\ldots,m-1\}$ then the $T$-graded and $T$-stripe are the same GNS. 
For GNSs associated to numerical semigroups generated by two elements we can compute the embedding dimension and the type.

\begin{corollary}
Let $T=\langle m,n\rangle$ be a numerical semigroup of embedding dimension 2 and $S\subseteq \mathbb{N}^{d}$ be the $T$-graded GNS. Then:
\begin{itemize}
\item[a)] $\operatorname{e}(S)=\binom{m+d-1}{d-1}+\binom{n+d-1}{d-1}$.
\item[b)] $\operatorname{t}(S)=\binom{mn-m-n+d-1}{d-1}$
\end{itemize}
\end{corollary}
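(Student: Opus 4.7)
The proof plan is a direct application of the two structural results established immediately before the corollary, together with the classical description of pseudo-Frobenius elements for two-generated numerical semigroups.

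For part (a), I would invoke Theorem \ref{teorstripe} with the minimal generating set $\operatorname{G}(T)=\{m,n\}$. This gives $\operatorname{G}(S)=G_m\cup G_n$. The union is disjoint because any element of $G_m$ has coordinate-sum $m$ while any element of $G_n$ has coordinate-sum $n$, and $m\neq n$ by the embedding dimension assumption. Using the cardinality formula $|G_i|=\binom{i+d-1}{d-1}$ recalled in Section~2, I then obtain
\[
\operatorname{e}(S)=|G_m|+|G_n|=\binom{m+d-1}{d-1}+\binom{n+d-1}{d-1}.
\]

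For part (b), I would use Proposition~\ref{pseudoFrobStripe}(1), which gives $\operatorname{PF}(S)=\bigcup_{i\in \operatorname{PF}(T)}G_i$. The key input is that when $T=\langle m,n\rangle$ has embedding dimension $2$ (so $\gcd(m,n)=1$), $T$ is symmetric, hence $\operatorname{PF}(T)=\{\operatorname{F}(T)\}$; moreover $\operatorname{F}(T)=mn-m-n$ by Sylvester's formula (both facts are standard and can be cited from \cite{rosales2009numerical}; the symmetry also follows from \cite[Corollary~10.22]{rosales2009numerical} already invoked in Lemma~\ref{Lemma: Condizioni sufficiente A_d<0}). Therefore
\[
\operatorname{PF}(S)=G_{mn-m-n},
\]
and taking cardinalities yields $\operatorname{t}(S)=\binom{mn-m-n+d-1}{d-1}$.

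There is no real obstacle: both parts are essentially instantiations of Theorem~\ref{teorstripe} and Proposition~\ref{pseudoFrobStripe} in the simplest nontrivial case. The only point requiring a citation outside the paper is the fact that two-generator numerical semigroups are symmetric with Frobenius number $mn-m-n$, which is standard.
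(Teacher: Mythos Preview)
Your proposal is correct and matches the paper's own proof exactly: the paper also derives (a) directly from Theorem~\ref{teorstripe} and (b) from Proposition~\ref{pseudoFrobStripe} together with $\operatorname{PF}(T)=\{\operatorname{F}(T)\}$ and $\operatorname{F}(T)=mn-m-n$. Your version simply spells out the disjointness of $G_m$ and $G_n$ and the source of the symmetry fact, which the paper leaves implicit.
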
 

\begin{proof}
 The first statement easily follows from Theorem~\ref{teorstripe}. Since $\operatorname{PF}(T)=\{F(T)\}$ and $F(T)=mn-m-n$  the second statement follows from Proposition~\ref{pseudoFrobStripe}. 
\end{proof}


If $T=\langle m,n\rangle$ and $S\subseteq \mathbb{N}^{2}$ is the $T$-graded GNS in $\mathbb{N}^{2}$ then the inequality $\operatorname{e}(S)\geq 2\,(\operatorname{t}(S)+1)$ is equivalent to $2mn\leq 3(m+n)-2$ and it is true only for $m=2$ and $n=3$. 
Now we fix $m=2$ and consider the numerical semigroups $T=\langle 2, n\rangle$, with $n>3$ an odd number.

\begin{proposition}
	Let $T=\langle 2,n\rangle$, $n$ an odd integer, $n\geq 5$, and let $S\subseteq \mathbb{N}^{d}$ be the $T$-graded GNS. Then $S$ satisfies the generalized Wilf's conjecture.
\end{proposition}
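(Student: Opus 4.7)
The plan is to verify Wilf's inequality $\operatorname{e}(S)\operatorname{n}(S) \geq d\operatorname{c}(S)$ directly by combining two independent bounds: a ``layer-by-layer'' comparison between gaps and small elements, and a crude lower bound on $\operatorname{e}(S)$. First, record the combinatorial data. Since $T=\langle 2,n\rangle$ is symmetric with $\operatorname{F}(T)=n-2$, one has $\operatorname{H}(T)=\{1,3,\ldots,n-2\}$ and $\operatorname{N}(T)=\{0,2,\ldots,n-3\}$, each of cardinality $(n-1)/2$. Using Theorem~\ref{teorstripe}, Proposition~\ref{trivial}, and the remark on $\operatorname{H}(S)$ preceding it, this translates to
\begin{equation*}
\operatorname{e}(S) = \binom{d+1}{2} + \binom{n+d-1}{d-1}, \qquad \operatorname{g}(S) = \sum_{k\in\operatorname{H}(T)}\binom{k+d-1}{d-1}, \qquad \operatorname{n}(S) = \sum_{k\in\operatorname{N}(T)}\binom{k+d-1}{d-1}.
\end{equation*}

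The decisive step is the bijection $\varphi\colon \operatorname{H}(T)\to \operatorname{N}(T)$, $\varphi(k)=k-1$, which is well defined precisely because $\operatorname{H}(T)$ and $\operatorname{N}(T)$ alternate. For every $k\in\operatorname{H}(T)$, the bound $k\geq 1$ gives
\begin{equation*}
\frac{\binom{k+d-1}{d-1}}{\binom{(k-1)+d-1}{d-1}} \;=\; \frac{k+d-1}{k} \;\leq\; d,
\end{equation*}
since $k+d-1\leq kd$ is equivalent to $(k-1)(d-1)\geq 0$. Summing over $k\in\operatorname{H}(T)$ along $\varphi$ yields $\operatorname{g}(S)\leq d\,\operatorname{n}(S)$, and consequently
\begin{equation*}
\operatorname{c}(S) \;=\; \operatorname{g}(S)+\operatorname{n}(S) \;\leq\; (d+1)\,\operatorname{n}(S).
\end{equation*}

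To conclude, I estimate $\operatorname{e}(S)$ from below. The binomial $\binom{n+d-1}{d-1}$ is non-decreasing in $n$ and equals $\binom{d+1}{d-1}=\binom{d+1}{2}$ at $n=2$; hence $n\geq 5>2$ gives $\binom{n+d-1}{d-1}\geq \binom{d+1}{2}$, so $\operatorname{e}(S)\geq 2\binom{d+1}{2}=d(d+1)$. Multiplying,
\begin{equation*}
\operatorname{e}(S)\,\operatorname{n}(S) \;\geq\; d(d+1)\cdot\frac{\operatorname{c}(S)}{d+1} \;=\; d\,\operatorname{c}(S),
\end{equation*}
which is the generalized Wilf's conjecture. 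No serious obstacle is expected: the only delicate point is noticing that the sufficient condition of Corollary~\ref{wilfType} is too weak here, because $\operatorname{t}(S)=\binom{n+d-3}{d-1}$ is asymptotically of the same order as $\binom{n+d-1}{d-1}$, while the relaxation $\operatorname{c}(S)\leq(d+1)\operatorname{n}(S)$ obtained from the pairing is exactly what is needed to absorb the factor $d$ on the right-hand side of Wilf's inequality.
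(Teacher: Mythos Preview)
Your proof is correct. The central idea—the bijection $\varphi(k)=k-1$ from $\operatorname{H}(T)$ to $\operatorname{N}(T)$ together with the layer-ratio bound $|G_k|/|G_{k-1}|=(k+d-1)/k\leq d$—is exactly the engine of the paper's proof as well. Where you diverge is in the packaging: the paper works with the equivalent form $(\operatorname{e}(S)-d)\operatorname{n}(S)\geq d\operatorname{g}(S)$ and proves, for each $k\in\operatorname{H}(T)$, the single inequality $\binom{n+d-1}{d-1}\binom{k+d-2}{d-1}\geq d\binom{k+d-1}{d-1}$, which it then establishes by expanding both binomials and comparing the resulting products factor by factor. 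You instead decouple the two ingredients, first summing the ratio bound to get $\operatorname{c}(S)\leq(d+1)\operatorname{n}(S)$ and then using the crude estimate $\operatorname{e}(S)\geq 2\binom{d+1}{2}=d(d+1)$. Your organization is a bit cleaner: it avoids the product expansion entirely, and the numerical requirement it imposes on $\binom{n+d-1}{d-1}$ (namely $\geq \binom{d+1}{2}$) is actually weaker than what the paper's term-by-term inequality needs at $k=1$ (namely $\geq d^2$).
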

\begin{proof}
	In such a case we have $\operatorname{H}(T)=\{1,3,5,\ldots,n-2\}$ and, by Proposition \ref{trivial}, $\operatorname{N}(T)=\{0,2,4,n-3\}$. We consider the generalized Wilf's conjecture in its equivalent expression $(\operatorname{e}(S)-d)\operatorname{n}(S)\geq d \operatorname{g}(S)$, that leads to the following inequality: 
	$$\left(\frac{d(d-1)}{2}+\binom{n+d-1}{d-1}\right)\cdot \sum_{k\in\operatorname{H}(T)}\binom{k-1
	+d-1}{d-1}\geq d \cdot \sum_{k\in \operatorname{H}(T)}\binom{k+d-1}{d-1}$$
In particular it suffices to prove the following inequality for all $k\in\{1,3,\ldots,n-2\}$:
$$\binom{n+d-1}{d-1}\cdot\binom{k-1+d-1}{d-1}\geq d \cdot \binom{k+d-1}{d-1}$$
Such an inequality is equivalent to the following:
$$(n+d-1)(n+d-2)\cdots(n+1)\cdot (k-1+d-1)(k-1+d-2)\cdots k\geq d!\cdot (k+d-1)(k+d-2)\cdots (k+1)$$
that reduces to $(n+d-1)(n+d-2)\cdots(n+1)\cdot k \geq d! \cdot (k+d-1)$. Observe that for all $k\in \operatorname{H}(T)$ we have $n+d-1\geq k+d-1$, moreover the inequality $(n+d-2)(n+d-3)\cdots(n+1)\geq d(d-1)\cdots 2$ holds since $n+d-2\geq d$ and $n+d-i\geq d-i$, for $3\leq i\leq d-1$.
\end{proof}

Finally we want to provide another property that a $T$-graded GNS inherits by the numerical semigroup $T$. Let $\leq$ be the natural partial order in $\mathbb{N}^d$. If $S\subseteq \mathbb{N}^d$ is a GNS we define $\operatorname{FA}(S)=\mathrm{Maximals}_{\leq}\operatorname{H}(S)$ and denote $\tau(S)=|\operatorname{FA}(S)|$. In \cite{singhal2021frobenius} the authors define $S$ \emph{quasi-irreducible} if for all $\mathbf{x}\in \operatorname{H}(S)$ then $2\mathbf{x}\in \operatorname{FA}(S)$ or there exists $\mathbf{F}\in \operatorname{FA}(S)$ such that $\mathbf{F}-\mathbf{x}\in S$ and $S$ \emph{quasi-symmetric} if $\tau(S)=\operatorname{t}(S)$. Observe that if $S$ is a $T$-graded GNS then it has in general two or more maximals in $\operatorname{H}(S)$ with respect to $\leq$, so it is never irreducible (\cite{cisto2019irreducible}). We want to describe conditions on $T$ such that the $T$-graded GNS is quasi-irreducible or quasi-symmetric. We first provide the following generalization of \cite[Proposition 2.5]{cisto2019irreducible}.

\begin{proposition}
Let $S\subseteq \mathbb{N}^d$ be a GNS. Then $S$ is quasi-irreducible if and only if $\operatorname{FA}(S)=\operatorname{SG}(S)$.
\label{FA=SG}
\end{proposition}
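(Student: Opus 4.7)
The plan is to prove the biconditional by establishing both inclusions $\operatorname{FA}(S)\subseteq \operatorname{SG}(S)$ and $\operatorname{SG}(S)\subseteq \operatorname{FA}(S)$ under the quasi-irreducibility hypothesis, and separately verifying the quasi-irreducibility condition under the assumption $\operatorname{FA}(S)=\operatorname{SG}(S)$. The unifying tool is the unconditional inclusion $\operatorname{FA}(S)\subseteq \operatorname{PF}(S)$, which generalizes the corresponding fact about $\operatorname{F}(S)$ in \cite[Proposition 2.5]{cisto2019irreducible}: for any $\mathbf{F}\in\operatorname{FA}(S)$ and $\mathbf{s}\in S\setminus\{\mathbf{0}\}$, the element $\mathbf{F}+\mathbf{s}$ strictly dominates $\mathbf{F}$; if $\mathbf{F}+\mathbf{s}$ were a gap it would lie below some $\mathbf{F}'\in\operatorname{FA}(S)$ strictly exceeding $\mathbf{F}$, contradicting maximality. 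A parallel maximality argument shows that $2\mathbf{F}\in\operatorname{FA}(S)$ is impossible for $\mathbf{F}\in\operatorname{FA}(S)$, since $\mathbf{F}<2\mathbf{F}$ and both would belong to the antichain $\operatorname{FA}(S)$.

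For the direction $(\Rightarrow)$, I would take $\mathbf{F}\in\operatorname{FA}(S)$ and apply the quasi-irreducibility condition at $\mathbf{x}=\mathbf{F}$. The alternative $2\mathbf{F}\in\operatorname{FA}(S)$ is excluded by the remark above, and the alternative providing $\mathbf{F}'\in\operatorname{FA}(S)$ with $\mathbf{F}'-\mathbf{F}\in S$ forces $\mathbf{F}'=\mathbf{F}$ because two comparable maximal gaps must coincide; hence the nontrivial content of the condition must place $\mathbf{F}$ in $\operatorname{SG}(S)$, which together with $\mathbf{F}\in\operatorname{PF}(S)$ yields the desired inclusion. For $\operatorname{SG}(S)\subseteq\operatorname{FA}(S)$, I plan a contradiction argument: if $\mathbf{x}\in\operatorname{SG}(S)$ but $\mathbf{x}<\mathbf{F}$ for some $\mathbf{F}\in\operatorname{FA}(S)$, set $\mathbf{y}=\mathbf{F}-\mathbf{x}\neq \mathbf{0}$; if $\mathbf{y}\in S$, the pseudo-Frobenius property of $\mathbf{x}$ gives $\mathbf{F}\in S$ against $\mathbf{F}\in\operatorname{H}(S)$, while if $\mathbf{y}\in\operatorname{H}(S)$, invoking quasi-irreducibility on $\mathbf{y}$ and chasing both alternatives with the PF property of $\mathbf{x}$ should produce a contradiction in each case.

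For $(\Leftarrow)$, I would fix $\mathbf{x}\in\operatorname{H}(S)$ and select a maximal gap $\mathbf{F}\geq\mathbf{x}$. If $\mathbf{x}=\mathbf{F}$, then $\mathbf{x}\in \operatorname{SG}(S)$ by hypothesis and the first alternative of the quasi-irreducibility condition is satisfied; if $\mathbf{F}-\mathbf{x}\in S$, the second alternative is immediate. The delicate subcase is $\mathbf{x}<\mathbf{F}$ with $\mathbf{F}-\mathbf{x}\in\operatorname{H}(S)$, which I plan to rule out by exploiting $\mathbf{F}\in\operatorname{SG}(S)\subseteq\operatorname{PF}(S)$ and iteratively replacing $\mathbf{F}$ with a maximal gap built from the smaller gap $\mathbf{F}-\mathbf{x}$, descending on the natural partial order of $\operatorname{H}(S)$ until a valid witness for the second alternative is exhibited. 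The main obstacle is precisely this descent: ensuring that the iteration terminates with a legitimate $\mathbf{F}''\in\operatorname{FA}(S)$ satisfying $\mathbf{F}''-\mathbf{x}\in S$, without violating maximality along the way or producing an infinite chain, is what I expect to require the most care.
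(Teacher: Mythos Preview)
Your outline has a genuine gap in each direction, and in both places the paper's argument is simpler than what you propose.

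\textbf{The inclusion $\operatorname{SG}(S)\subseteq\operatorname{FA}(S)$ in $(\Rightarrow)$.} Your detour through $\mathbf{y}=\mathbf{F}-\mathbf{x}$ does not close. Concretely, if $\mathbf{y}\in\operatorname{H}(S)$ and quasi-irreducibility at $\mathbf{y}$ gives the alternative $2\mathbf{y}\in\operatorname{FA}(S)$, I do not see how the pseudo-Frobenius property of $\mathbf{x}$ produces a contradiction: that property says $\mathbf{x}+\mathbf{s}\in S$ for every $\mathbf{s}\in S^{*}$, so to contradict it you would need to exhibit some $\mathbf{s}\in S^{*}$ with $\mathbf{x}+\mathbf{s}\notin S$, and nothing in the data $2\mathbf{y}\in\operatorname{FA}(S)$ supplies such an $\mathbf{s}$. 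The same obstruction appears in the other alternative. The paper avoids the whole detour by applying quasi-irreducibility \emph{directly to $\mathbf{x}$}: since $\mathbf{x}\in\operatorname{SG}(S)$ one has $2\mathbf{x}\in S$, so $2\mathbf{x}\notin\operatorname{FA}(S)$, hence there is $\mathbf{F}'\in\operatorname{FA}(S)$ with $\mathbf{F}'-\mathbf{x}\in S$; if this difference were nonzero the PF property of $\mathbf{x}$ would force $\mathbf{F}'\in S$, a contradiction, so $\mathbf{x}=\mathbf{F}'\in\operatorname{FA}(S)$. (Also, your argument for $\operatorname{FA}(S)\subseteq\operatorname{SG}(S)$ is unnecessarily tangled: this inclusion is unconditional and already follows from your own preliminary observations that $\operatorname{FA}(S)\subseteq\operatorname{PF}(S)$ and $2\mathbf{F}\in S$ for $\mathbf{F}\in\operatorname{FA}(S)$; quasi-irreducibility at $\mathbf{F}$ is trivially satisfied with $\mathbf{F}'=\mathbf{F}$ and gives no information.)

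\textbf{The direction $(\Leftarrow)$.} Your ``descending'' iteration is not specified precisely enough to terminate with a witness, and the paper proceeds differently. It case-splits on whether $\mathbf{x}\in\operatorname{PF}(S)$. If $\mathbf{x}\in\operatorname{PF}(S)$ but $2\mathbf{x}\notin S$ (this case is not covered by your outline at all), let $k=\max\{i:i\mathbf{x}\notin S\}$; one checks $k\geq 3$, and both $k\mathbf{x}$ and $(k-1)\mathbf{x}$ land in $\operatorname{SG}(S)=\operatorname{FA}(S)$, contradicting maximality since $(k-1)\mathbf{x}<k\mathbf{x}$. If instead $\mathbf{x}\notin\operatorname{PF}(S)$, the paper builds an \emph{ascending} chain in $\operatorname{H}(S)$ of the form $\mathbf{f}_i=\mathbf{x}+\mathbf{s}_i$ with $\mathbf{s}_i\in S^{*}$ and $\mathbf{f}_1<\mathbf{f}_2<\cdots$, which must terminate at some $\mathbf{f}_k\in\operatorname{SG}(S)=\operatorname{FA}(S)$; then $\mathbf{f}_k-\mathbf{x}=\mathbf{s}_k\in S$ is the required witness. (A small side remark: when $\mathbf{x}\in\operatorname{FA}(S)$ you claim the \emph{first} alternative of quasi-irreducibility holds, but in fact $2\mathbf{x}\in S$ there; it is the second alternative, with witness $\mathbf{x}$ itself, that is satisfied.)
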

\begin{proof}
$\Rightarrow)$ Observe that $\operatorname{FA}(S)\subseteq \operatorname{SG}(S)$. Let $\mathbf{x}\in\operatorname{SG}(S)$ and suppose $\mathbf{x} \notin \operatorname{FA}(S)$. By hypotheses there exists $\mathbf{F}\in \operatorname{FA}(S)$ such that $\mathbf{F}-\mathbf{x}\in S\setminus \{\mathbf{0}\}$, in particular there exists $\mathbf{s}\in S\setminus \{\mathbf{0}\}$ such that $\mathbf{x}+\mathbf{s}=\mathbf{F}\notin S$, that is a contradiction.\\
$\Leftarrow)$ Let $\mathbf{x}\in \operatorname{H}(S)$ such that $2\mathbf{x}\notin \operatorname{FA}(S)$. We prove that there exists $\mathbf{F} \in \operatorname{FA}(S)$ such that $\mathbf{F}-\mathbf{x}\in S$. By hypotheses we can assume $\mathbf{x}\notin \operatorname{SG}(S)$, so we have two possibilities:\\
1) $2\mathbf{x}\notin S$ and for all $\mathbf{s}\in S\setminus \{\mathbf{0}\}$ we have $\mathbf{x}+\mathbf{s}\in S$. Observe that, in such a case, for all $i\in \mathbb{N}$ then $i\mathbf{x}+\mathbf{s}\in S$ for all $\mathbf{s}\in S$ and if $k=\max \{i \in \mathbb{N}\mid i\mathbf{x}\notin S\}$ then $k\geq 3$ and $k\mathbf{x}\in \operatorname{SG}(S)=\operatorname{FA}(S)$. Since $2(k-1)>k$ then $2(k-1)\mathbf{x}\in S$, that is $(k-1)\mathbf{x} \in \operatorname{SG}(S)=\operatorname{FA}(S)$, but this is a contradiction since $(k-1)\mathbf{x}\leq k\mathbf{x}$.\\
2) There exists $\mathbf{s}_1\in S\setminus \{\mathbf{0}\}$ such that $\mathbf{x}+\mathbf{s}_1\notin S$. If $\mathbf{x}+\mathbf{s}_1\in \operatorname{SG}(S)$ we have finished. Otherwise put $\mathbf{f}_1=\mathbf{x}+\mathbf{s}_1$ then, arguing as in the proof of (1) of \cite[Proposition 2.6]{cisto2019irreducible} we obtain an element $\mathbf{f}_2\notin S$ such that $\mathbf{f}_2=\mathbf{x}+\mathbf{s}_2$ with $\mathbf{s}_2\in S\setminus \{\mathbf{0}\}$ and $\mathbf{f}_2>\mathbf{f}_{1}$. If $\mathbf{f}_2\in \operatorname{SG}(S)$ we have finished, otherwise by the same argument we obtain a sequence of elements $\mathbf{f}_i\notin S$, for $i>1$, such that $\mathbf{f}_i=\mathbf{x}+\mathbf{s}_i$ with $\mathbf{s}_i\in S\setminus \{\mathbf{0}\}$ and $\mathbf{f}_i>\mathbf{f}_{i-1}$. By the finiteness of $\operatorname{H}(S)$ there exists $k\in \mathbb{N}$ such that $\mathbf{f}_k\in \operatorname{SG}(S)$, that allows to conclude the proof.
\end{proof} 

\begin{corollary}
Let $T$ be a numerical semigroup and $S$ be the $T$-graded GNS in $\mathbb{N}^d$. Then 
\begin{itemize}
\item $S$ is quasi-irreducible if and only if $T$ is irreducible. 
\item $S$ is quasi-symmetric if and only if $T$ is symmetric.
\end{itemize} 
\end{corollary}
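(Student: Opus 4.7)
The plan is to first compute the set $\operatorname{FA}(S)$ for a $T$-graded GNS $S$, and then combine this with the descriptions of $\operatorname{SG}(S)$ and $\operatorname{PF}(S)$ given in Proposition~\ref{pseudoFrobStripe}. The main observation is that all holes of $S$ with the same coordinate sum are \emph{incomparable} under $\leq$, and that moving from one ``layer'' $G_i$ to a higher one strictly increases the natural partial order while preserving membership in $\operatorname{H}(S)$ exactly when the new layer is still inside $\operatorname{H}(T)$. Because $\operatorname{H}(T)$ has a largest element $\operatorname{F}(T)$ (the Frobenius number of $T$), the natural candidate for $\operatorname{FA}(S)$ is precisely $G_{\operatorname{F}(T)}$.

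I would establish this claim $\operatorname{FA}(S)=G_{\operatorname{F}(T)}$ as follows. If $\mathbf{x}\in G_{\operatorname{F}(T)}$ then for every $\mathbf{y}>\mathbf{x}$ we have $|\mathbf{y}|>\operatorname{F}(T)$, so $|\mathbf{y}|\in T$ and hence $\mathbf{y}\in S$; this forces $\mathbf{x}\in \operatorname{FA}(S)$. Conversely, if $\mathbf{x}\in \operatorname{H}(S)$ with $|\mathbf{x}|=i\in \operatorname{H}(T)$ and $i<\operatorname{F}(T)$, setting $k=\operatorname{F}(T)-i\geq 1$ produces $\mathbf{y}=\mathbf{x}+k\mathbf{e}_1$ with $\mathbf{y}>\mathbf{x}$ and $|\mathbf{y}|=\operatorname{F}(T)\in \operatorname{H}(T)$, so $\mathbf{y}\in \operatorname{H}(S)$; hence $\mathbf{x}\notin \operatorname{FA}(S)$.

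For the first equivalence I would invoke Proposition~\ref{FA=SG}: $S$ is quasi-irreducible iff $\operatorname{FA}(S)=\operatorname{SG}(S)$. By the claim above and Proposition~\ref{pseudoFrobStripe}(2) this becomes $G_{\operatorname{F}(T)}=\bigcup_{i\in \operatorname{SG}(T)}G_i$. Since the layers $G_i$ are pairwise disjoint and each is nonempty, and since $\operatorname{F}(T)\in \operatorname{SG}(T)$ always, this equality holds if and only if $\operatorname{SG}(T)=\{\operatorname{F}(T)\}$, i.e.\ $T$ is irreducible (as recalled in Section~2).

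For the second equivalence, $\tau(S)=|\operatorname{FA}(S)|=|G_{\operatorname{F}(T)}|=\binom{\operatorname{F}(T)+d-1}{d-1}$, while Proposition~\ref{pseudoFrobStripe}(1) gives $\operatorname{t}(S)=\sum_{i\in \operatorname{PF}(T)}|G_i|$. Using $\operatorname{F}(T)\in \operatorname{PF}(T)$ and $|G_i|\geq 1$ for every $i\geq 0$, we see that $\tau(S)=\operatorname{t}(S)$ holds iff $\operatorname{PF}(T)=\{\operatorname{F}(T)\}$, iff $\operatorname{t}(T)=1$, iff $T$ is symmetric. The only nontrivial step is the identification of $\operatorname{FA}(S)$; once that is in hand the two equivalences are immediate consequences of the layer decompositions provided by Proposition~\ref{pseudoFrobStripe}.
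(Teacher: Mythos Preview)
Your proposal is correct and follows essentially the same approach as the paper: both identify $\operatorname{FA}(S)=G_{\operatorname{F}(T)}$ and then reduce the two equivalences, via Proposition~\ref{FA=SG} and Proposition~\ref{pseudoFrobStripe}, to $\operatorname{SG}(T)=\{\operatorname{F}(T)\}$ and $\operatorname{PF}(T)=\{\operatorname{F}(T)\}$ respectively. You simply spell out in more detail what the paper dismisses with ``it is not difficult to see''.
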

\begin{proof}
It is not difficult to see that $\operatorname{FA}(S)=\{\mathbf{a}\in \mathbb{N}^{d}\mid |\mathbf{a}| =\operatorname{F}(T)\}$. So, by Proposition~\ref{pseudoFrobStripe}, it follows that $\operatorname{FA}(S)=\operatorname{SG}(S)$ if and only if $\operatorname{SG}(T)=\{\operatorname{F}(T)\}$, equivalently $T$ is irreducible. Furthermore $\tau(S)=\operatorname{t}(S)$, that is $\operatorname{FA}(S)=\operatorname{PF}(S)$, if and only if $\operatorname{PF}(T)=\{\operatorname{F}(T)\}$, equivalently $T$ is symmetric. 
\end{proof}

\begin{remark}
The analogous of the previous result does not occur for $T$-stripe GNSs. In fact if $T$ is a numerical semigroup with multiplicity $m\geq 4$ and $S$ is the $T$-stripe GNS in $\mathbb{N}^d$ then, $\mathbf{h}=\mathbf{e}_1+(m-3)\mathbf{e}_2\in \operatorname{SG}(S)$ but $\mathbf{h}\notin \operatorname{FA}(S)$, since $\mathbf{e}_1+(m-2)\mathbf{e}_2\in \operatorname{H}(S)$.   
\end{remark}

\section{GNSs having gaps only in the axes}

\begin{definition}
Let $d\in\mathbb{N}$ and $S_1,S_2,\ldots,S_d$ be $d$ numerical semigroups different from $\mathbb{N}$. Set $\mathcal{H}=\bigcup_{i=1}^d\{h\mathbf{e}_i\mid h\in \operatorname{H}(S_i)\}$. It is easy to verify that $S=\mathbb{N}^d \setminus \mathcal{H}$ is a GNS, that we call $\mathrm{Axis}(S_1,S_2,\ldots,S_d)$.
\end{definition}

In order to characterize the minimal generators of $\mathrm{Axis}(S_1,S_2,\ldots,S_d)$ consider the following sets:
\begin{itemize}
\item $F_1=\bigcup_{i=1}^d \{n\mathbf{e}_i\mid n\ \mbox{is a minimal generator of}\ S_i\}$
\item $F_2=\{ \mathbf{e}_i+h\mathbf{e}_j\mid 2\leq h\leq \operatorname{m}(S_j); i,j \in \{1,\ldots,d\}\}, i\neq j\}$.
\item $F_3=\{\mathbf{e}_i+\mathbf{e}_j \mid i,j \in \{1,\ldots,d\}, i<j\}$
\item $F_4=\{\mathbf{e}_i+\mathbf{e}_j+\mathbf{e}_k\mid i,j,k \in \{1,\ldots,d\}, i<j<k\}$
\end{itemize}
Moreover if $d=2$ we assume conventionally that $F_4=\emptyset$ and $\binom{d}{3}=0$.

\begin{proposition}
Let $S$ be the $\mathrm{Axis}(S_1,S_2,\ldots,S_d)$ GNS. Then the set $G=\bigcup_{\ell=1}^4 F_\ell$ is the minimal system of generators of $S$. In particular
$$\operatorname{e}(S)=\sum_{i=1}^d\operatorname{e}(S_i)+(d-1)\sum_{i=1}^d(\operatorname{m}(S_i)-1)+\binom{d}{2}+\binom{d}{3}$$
\end{proposition}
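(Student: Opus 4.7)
The plan is to verify that $G:=F_1\cup F_2\cup F_3\cup F_4$ is the minimal system of generators of $S$ in three steps: show $G\subseteq \operatorname{G}(S)$, show $\operatorname{G}(S)\subseteq G$, and compute $\operatorname{e}(S)$. The elementary observation driving both inclusions is that $\mathbf{x}\in \mathbb{N}^d$ lies in $S$ exactly when either it has at least two non-zero coordinates (so that $\mathbf{x}\notin \mathcal{H}$) or it equals $n\mathbf{e}_i$ for some $n\in S_i$. This dichotomy tightly controls any decomposition $\mathbf{x}=\mathbf{u}+\mathbf{v}$ with $\mathbf{u},\mathbf{v}\in S\setminus\{\mathbf{0}\}$ via support analysis.

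For the first inclusion, each of the four families is shown to be indecomposable. For $n\mathbf{e}_i\in F_1$, both summands must live on the $i$-th axis, forcing a non-trivial splitting of $n$ in $S_i$ and contradicting the minimality of $n$. For $\mathbf{e}_i+h\mathbf{e}_j\in F_2$ with $2\leq h\leq \operatorname{m}(S_j)$, the single unit in the $i$-th slot forces one summand to be $a\mathbf{e}_j$ with $1\leq a\leq h\leq \operatorname{m}(S_j)$, so $a=\operatorname{m}(S_j)=h$ and the other summand must equal $\mathbf{e}_i\notin S$, a contradiction. For the elements of $F_3$ and $F_4$, any non-trivial splitting would require one summand to be a bare unit vector $\mathbf{e}_l$, which is never in $S$.

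For the reverse inclusion, take $\mathbf{x}\in \operatorname{G}(S)$ and argue by cases on the size of its support. If $\mathbf{x}=n\mathbf{e}_i$, minimality in $S$ implies minimality of $n$ in $S_i$, so $\mathbf{x}\in F_1$. If $\mathbf{x}=a\mathbf{e}_i+b\mathbf{e}_j$ with $i\neq j$ and $a,b\geq 1$, the splittings $(\mathbf{e}_i+\mathbf{e}_j)+((a-1)\mathbf{e}_i+(b-1)\mathbf{e}_j)$ when $a,b\geq 2$, and $\operatorname{m}(S_j)\mathbf{e}_j+(\mathbf{e}_i+(b-\operatorname{m}(S_j))\mathbf{e}_j)$ when $a=1$ and $b>\operatorname{m}(S_j)$, both violate minimality, so the remaining two-support cases place $\mathbf{x}$ in $F_2\cup F_3$. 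When $\mathbf{x}$ has at least three non-zero coordinates, either $\mathbf{x}=\mathbf{e}_i+\mathbf{e}_j+\mathbf{e}_k\in F_4$, or some coordinate in a size-$3$ support exceeds $1$, or the support has size $\geq 4$; in all these cases $\mathbf{e}_{i_1}+\mathbf{e}_{i_2}$ can be peeled off for well-chosen indices $i_1,i_2$ so that the remainder still has at least two non-zero coordinates, hence lies in $S$, again contradicting minimality.

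Finally, the four sets are pairwise disjoint because they live in distinct support strata, so $\operatorname{e}(S)=|F_1|+|F_2|+|F_3|+|F_4|$; the stated formula follows from $|F_1|=\sum_i\operatorname{e}(S_i)$, from $|F_2|=(d-1)\sum_i(\operatorname{m}(S_i)-1)$ (coming from the $d(d-1)$ ordered pairs $(i,j)$ each contributing $\operatorname{m}(S_j)-1$ values of $h$), and from $|F_3|=\binom{d}{2}$ and $|F_4|=\binom{d}{3}$. The most delicate point is the size-three support subcase of the reverse inclusion: one must pair a coordinate of value $\geq 2$ with a unit-valued one so that the remainder retains at least two non-zero components, which is precisely the obstruction that forces $F_4$ to contain exactly the configurations $\mathbf{e}_i+\mathbf{e}_j+\mathbf{e}_k$.
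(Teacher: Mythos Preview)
Your proof is correct and follows essentially the same strategy as the paper: both arguments hinge on the observation that an element of $\mathbb{N}^d$ with at least two non-zero coordinates automatically lies in $S$, and both proceed by a case analysis on the support size. The only difference is organizational: the paper shows $G$ generates $S$ by explicitly writing each $\mathbf{s}\in S\setminus G$ as a sum of elements of $G$ and then asserts minimality, whereas you establish the two inclusions $G\subseteq\operatorname{G}(S)$ and $\operatorname{G}(S)\subseteq G$ directly by exhibiting a single decomposition for each non-$G$ element---but the decompositions you use (peeling off $\mathbf{e}_i+\mathbf{e}_j$, splitting off $\operatorname{m}(S_j)\mathbf{e}_j$, etc.) are the same ones appearing in the paper's proof.
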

\begin{proof}
Observe that $G\subseteq S$. We first prove that each $\mathbf{s}\in S \setminus (G\cup \{\mathbf{0}\})$ is a sum of elements in $G$. If $\mathbf{s}=\lambda\mathbf{e}_i$ for some $i\in \{1,\ldots,d\}$, that is $\mathbf{s}$ belongs to $i$-th axis, then it is not difficult to check that $\mathbf{s}$ is generated by the elements in $F_1$. Suppose $\mathbf{s}=\mathbf{e}_i+\lambda \mathbf{e}_j$, with $\lambda\neq 0$ and $i\neq j$. If $1\leq \lambda\leq \operatorname{m}(S_j)$ then $\mathbf{s}\in G$, so we suppose $\lambda> \operatorname{m}(S_j)$. Let $k=\max \{n\mid n\operatorname{m}(S_j)<\lambda\}$, then $\mathbf{s}=\mathbf{e}_i+\lambda \mathbf{e}_j=\mathbf{e}_i+(\lambda-k\operatorname{m}(S_j))\mathbf{e}_j+k\operatorname{m}(S_j)\mathbf{e}_j$, where $\mathbf{e}_i+(\lambda-k\operatorname{m}(S_j))\mathbf{e}_j\in F_2\cup F_3$ and $k\operatorname{m}(S_j)\mathbf{e}_j$ is generated by the set $F_1$. So $\mathbf{e}_i+\lambda \mathbf{e}_j$ is a sum of elements in $G$ for all $\lambda\in \mathbb{N}$ and for all $i,j\in\{1,\ldots,d\}$, $i\neq j$. Suppose that $\mathbf{s}=\gamma\mathbf{e}_i+\lambda \mathbf{e}_j$, $\gamma,\lambda \in \mathbb{N}\setminus\{0\}$. If $\gamma=1$ or $\lambda=1$ we are done, otherwise we can write $\mathbf{s}=\mathbf{e}_j+(\gamma-1)\mathbf{e}_i+\mathbf{e}_i+(\lambda-1) \mathbf{e}_j$. It is easy to check that it is a sum of elements in $G$ by the previous argument. So for all $\gamma,\lambda \in \mathbb{N}\setminus\{0\}$ we obtain $\mathbf{s}=\gamma\mathbf{e}_i+\lambda \mathbf{e}_j$ as a sum of elements in $G$. Suppose $\mathbf{s}=\alpha\mathbf{e}_i+\beta\mathbf{e}_j+\gamma\mathbf{e}_k$ with $\alpha,\beta,\gamma\in \mathbb{N}\setminus\{0\}$ and $i<j<k$. If $\alpha=\beta=\gamma=1$ then $\mathbf{s}\in F_4$, otherwise we can suppose without loss of generality that $\alpha>1$ and, in such a case, $\mathbf{s}=[(\alpha-1)\mathbf{e}_i+\beta\mathbf{e}_j]+[\mathbf{e}_i+\gamma\mathbf{e}_k]$, that is a sum of elements in $G$. So, $\alpha\mathbf{e}_i+\beta\mathbf{e}_j+\gamma\mathbf{e}_k$ is a sum of elements in $G$ for all $\alpha,\beta,\gamma\in \mathbb{N}\setminus\{0\}$ and for all $i<j<k$. Finally consider $\mathbf{s}=\sum_{i=1}^r\lambda_i\mathbf{e}_{k_i}$, $r>3$, $\{k_1,\ldots,k_r\}\subseteq \{1,\ldots,d\}$ and $\lambda_i\in \mathbb{N}\setminus \{0\}$ for all $i\in \{1,\ldots,r\}$. In such a case, if $r$ is even we consider $\mathbf{s}=\sum_{i=1}^{r/2} (\lambda_{2i-1}\mathbf{e}_{k_{2i-1}}+\lambda_{2i}\mathbf{e}_{k_{2i}})$, if $r$ is odd we consider $\mathbf{s}=\sum_{i=1}^{(r-3)/2} (\lambda_{2i-1}\mathbf{e}_{k_{2i-1}}+\lambda_{2i}\mathbf{e}_{k_{2i}})+ (\lambda_{r-2}\mathbf{e}_{k_{r-2}}+\lambda_{r-1}\mathbf{e}_{k_{r-1}}+\lambda_{r}\mathbf{e}_{k_{r}})$. In both cases it is easy to argue that $\mathbf{s}$ is a sum of elements in $G$. So each $\mathbf{s}\in S \setminus (G\cup \{\mathbf{0}\})$ is sum of elements in $G$, in particular all generators of $S$ are contained in $G$. Moreover all elements in $G$ cannot be expressed as a sum of non zero elements of $S$, so every element of $G$ is a minimal generator of $S$.  
\end{proof}

\begin{proposition}
Let $S$ be the $\mathrm{Axis}(S_1,S_2,\ldots,S_d)$ GNS. Then $\operatorname{PF}(S)=\bigcup_{i=1}^d\{f\mathbf{e}_i\mid f\in\operatorname{PF}(S_i) \}$. In particular $\operatorname{t}(S)=\sum_{i=1}^d\operatorname{t}(S_i)$.
\end{proposition}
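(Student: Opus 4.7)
The proof splits into the two standard inclusions for sets of pseudo-Frobenius elements, and crucially uses that every gap of $S$ lies on a coordinate axis, so $\operatorname{H}(S)=\mathcal{H}=\bigcup_{i=1}^d\{h\mathbf{e}_i\mid h\in\operatorname{H}(S_i)\}$.

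For the inclusion $\supseteq$, fix $i\in[d]$ and $f\in\operatorname{PF}(S_i)$, and put $\mathbf{x}=f\mathbf{e}_i$; by definition of $\mathcal{H}$ this is a gap of $S$. Take an arbitrary $\mathbf{s}\in S\setminus\{\mathbf{0}\}$ and write $\mathbf{s}=\sum_{j=1}^{d}s_j\mathbf{e}_j$. I would distinguish two cases. If $\mathbf{s}=s_i\mathbf{e}_i$ lies entirely on the $i$-th axis, then $s_i\in S_i\setminus\{0\}$, hence $f+s_i\in S_i$ because $f\in\operatorname{PF}(S_i)$, and thus $\mathbf{x}+\mathbf{s}=(f+s_i)\mathbf{e}_i\in S$. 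Otherwise, $\mathbf{s}$ has a nonzero component off the $i$-th axis, so $\mathbf{x}+\mathbf{s}$ has at least two nonzero coordinates; in particular it is not of the form $h\mathbf{e}_k$, so it is not in $\mathcal{H}=\operatorname{H}(S)$, whence $\mathbf{x}+\mathbf{s}\in S$. This shows $\mathbf{x}\in\operatorname{PF}(S)$.

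For the inclusion $\subseteq$, let $\mathbf{x}\in\operatorname{PF}(S)$. Since $\mathbf{x}\in\operatorname{H}(S)=\mathcal{H}$, we have $\mathbf{x}=h\mathbf{e}_i$ for some $i\in[d]$ and some $h\in\operatorname{H}(S_i)$. I need to verify that $h\in\operatorname{PF}(S_i)$. Let $s\in S_i\setminus\{0\}$; then $s\mathbf{e}_i\in S\setminus\{\mathbf{0}\}$, and by the pseudo-Frobenius property $\mathbf{x}+s\mathbf{e}_i=(h+s)\mathbf{e}_i\in S$. Since this element lies on the $i$-th axis and is in $S$, necessarily $h+s\in S_i$. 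As $s$ was arbitrary in $S_i\setminus\{0\}$, we conclude $h\in\operatorname{PF}(S_i)$, and thus $\mathbf{x}\in\{f\mathbf{e}_i\mid f\in\operatorname{PF}(S_i)\}$.

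For the type formula, the union $\bigcup_{i=1}^d\{f\mathbf{e}_i\mid f\in\operatorname{PF}(S_i)\}$ is disjoint because for each $i$ every element $f\mathbf{e}_i$ with $f\in\operatorname{PF}(S_i)$ has $f\geq 1$ (since $0\in S_i$ is never a gap), so an element lying simultaneously on the $i$-th and $j$-th axes with $i\neq j$ would have to be $\mathbf{0}$, which is impossible. Hence $\operatorname{t}(S)=\sum_{i=1}^{d}|\operatorname{PF}(S_i)|=\sum_{i=1}^{d}\operatorname{t}(S_i)$. There is no real obstacle here; the only point that requires care is the case split in the inclusion $\supseteq$, where one must exploit that any vector with support outside the $i$-th axis automatically leaves $\mathcal{H}$ once it is perturbed in another coordinate.
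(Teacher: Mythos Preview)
Your proof is correct and is exactly the straightforward verification the paper has in mind; the paper itself simply writes ``Trivial'' for this proposition, so you have fleshed out the obvious argument rather than taken a different route.
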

\begin{proof}
Trivial. 
\end{proof}

\begin{remark}
Let $S$ be the $\mathrm{Axis}(S_1,S_2,\ldots,S_d)$ GNS. Then:
\begin{enumerate}
\item $\operatorname{n}(S)=\sum_{i=1}^d \operatorname{n}(S_i)-(d-1)$
\item $\operatorname{c}(S)=\sum_{i=1}^d \operatorname{c}(S_i)-(d-1)$
\item $\operatorname{t}(S)=\sum_{i=1}^d\operatorname{t}(S_i)$
\end{enumerate}
In fact all gaps belong to the coordinate axes of $\mathbb{N}^d$, the term $(d-1)$ occurs since $\mathbf{0}$ belongs to all coordinate axes and obviously $\operatorname{PF}(S)=\bigcup_{i=1}^d\{f\mathbf{e}_i\mid f\in\operatorname{PF}(S_i) \}$.
\label{invariants}
\end{remark}

\begin{proposition}
Let $S$ be the $\mathrm{Axis}(S_1,S_2,\ldots,S_d)$ GNS and suppose that $S_i$ satisfies Wilf's conjecture, that is $\operatorname{e}(S_i)\operatorname{n}(S_i)\geq \operatorname{c}(S_i)$, for all $i\in \{1,\ldots,d\}$. Then $S$ satisfies the generalized Wilf's conjecture.
\end{proposition}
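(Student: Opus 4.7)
The plan is to prove $\operatorname{e}(S)\operatorname{n}(S)\geq d\operatorname{c}(S)$ by a direct calculation, feeding the explicit formulas for the invariants of $S$ into a bookkeeping argument that combines the Wilf hypothesis $e_{k}n_{k}\geq c_{k}$ for each factor with the classical inequality $m_{k}n_{k}\geq c_{k}$. Let us abbreviate $e_{k}=\operatorname{e}(S_{k})$, $n_{k}=\operatorname{n}(S_{k})$, $c_{k}=\operatorname{c}(S_{k})$, $m_{k}=\operatorname{m}(S_{k})$, and write $E,N,C$ for the corresponding invariants of $S$. The previous proposition and Remark~\ref{invariants} give
\[E=\sum_{k=1}^{d}e_{k}+(d-1)\sum_{k=1}^{d}(m_{k}-1)+\binom{d}{2}+\binom{d}{3},\quad N=\sum_{k=1}^{d}n_{k}-(d-1),\quad C=\sum_{k=1}^{d}c_{k}-(d-1).\]
The auxiliary bound $m_{k}n_{k}\geq c_{k}$ is elementary: the multiples $0,m_{k},2m_{k},\ldots,\lfloor\operatorname{F}(S_{k})/m_{k}\rfloor m_{k}$ all lie in $\operatorname{N}(S_{k})$, so $n_{k}\geq\lfloor\operatorname{F}(S_{k})/m_{k}\rfloor+1\geq c_{k}/m_{k}$.

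The first step is to expand $EN$ into three pieces and, in the first two, insert the splitting $N=n_{k}+(N-n_{k})$, observing that $N-n_{k}=\sum_{j\neq k}n_{j}-(d-1)\geq 0$ because each $n_{j}\geq 1$. Applying $e_{k}n_{k}\geq c_{k}$ to the axis-generator part and $m_{k}n_{k}\geq c_{k}$ to the mixed part one obtains
\[\Big(\sum_{k}e_{k}\Big)N\geq \sum_{k}c_{k}+\sum_{k}e_{k}(N-n_{k}),\]
\[(d-1)\Big(\sum_{k}(m_{k}-1)\Big)N\geq (d-1)(C-N)+(d-1)\sum_{k}(m_{k}-1)(N-n_{k}),\]
where the identity $\sum_{k}c_{k}-\sum_{k}n_{k}=C-N$ has been used in the second line. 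Adding the trivial estimate $\bigl(\binom{d}{2}+\binom{d}{3}\bigr)N\geq\binom{d}{2}+\binom{d}{3}$ (valid since $\mathbf{0}\in\operatorname{N}(S)$ forces $N\geq 1$) and collecting the constants via $\sum_{k}c_{k}+(d-1)(C-N)=dC+(d-1)(1-N)$ yields the master inequality
\[EN\geq dC+(d-1)(1-N)+\sum_{k=1}^{d}\bigl[e_{k}+(d-1)(m_{k}-1)\bigr](N-n_{k})+\binom{d}{2}+\binom{d}{3}.\]

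The final step is the telescoping identity $\sum_{k}(N-n_{k})=dN-\sum_{k}n_{k}=(d-1)(N-1)$. Since each coefficient $e_{k}+(d-1)(m_{k}-1)\geq 1$, the weighted sum is bounded below by $(d-1)(N-1)$, which exactly cancels the term $(d-1)(1-N)$; what remains is
\[EN-dC\geq\binom{d}{2}+\binom{d}{3}\geq 0,\]
proving the generalized Wilf inequality. I do not expect any serious obstacle: the argument is essentially organised bookkeeping, and the crux is to notice that the identity $\sum_{k}(N-n_{k})=(d-1)(N-1)$ is built precisely to annihilate the $(d-1)(1-N)$ constant produced when Wilf's bound is summed across the $d$ factors. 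The residual slack $\binom{d}{2}+\binom{d}{3}$, contributed exactly by the generators in $F_{3}\cup F_{4}$, shows that the conclusion is attained with room to spare as soon as $d\geq 2$.
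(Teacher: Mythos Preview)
Your proof is correct and reaches the same conclusion as the paper, namely $EN\ge dC+\binom{d}{2}+\binom{d}{3}$, but the bookkeeping is organised differently. The paper fully expands the product $EN$, then applies $m_k\ge e_k$ to convert all multiplicity terms into embedding-dimension terms and invokes Wilf's inequality $e_kn_k\ge c_k$ repeatedly. You instead split $N=n_k+(N-n_k)$ inside each sum and pair the axis-generator part with Wilf's hypothesis and the mixed part with the independent bound $m_kn_k\ge c_k$; the telescoping identity $\sum_k(N-n_k)=(d-1)(N-1)$ then makes the cancellation of the constant $(d-1)(1-N)$ completely transparent. Your route is a bit more economical: it avoids the full double-sum expansion, and the auxiliary inequality $m_kn_k\ge c_k$ is proved directly rather than being deduced from $m_k\ge e_k$ together with Wilf. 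Both arguments, however, are the same at heart---a direct computation feeding the factor-by-factor Wilf bound into the explicit formulas for $E$, $N$, $C$---and they yield exactly the same residual slack coming from the generators in $F_3\cup F_4$.
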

\begin{proof}
Considering the previous results, by a direct computation we obtain: \begin{flalign*}
& \operatorname{e}(S)\operatorname{n}(S)  =\left(\sum_{i=1}^d\operatorname{e}(S_i)\right)\left(\sum_{j=1}^d\operatorname{n}(S_j)-(d-1)\right)+  &\\
&+\left(\sum_{j=1}^d\operatorname{n}(S_i)-(d-1)\right)\left[(d-1)\sum_{i=1}^d(\operatorname{m}(S_i)-1)+\binom{d}{2}+\binom{d}{3}\right]=  &\\
&=\sum_{i=1}^d\operatorname{e}(S_i)\operatorname{n}(S_i)+ (d-1)\sum_{i=1}^d\operatorname{n}(S_i)\operatorname{m}(S_i)+\sum_{i\neq j}\operatorname{e}(S_i)\operatorname{n}(S_j)+ (d-1)\sum_{i\neq j}\operatorname{n}(S_j)\operatorname{m}(S_i)+&\\
&+\left(\sum_{i=1}^d\operatorname{n}(S_i)-(d-1)\right)\left(\binom{d}{2}+\binom{d}{3}-d(d-1)\right)-(d-1)^2\sum_{i=1}^d\operatorname{m}(S_i)-(d-1)\sum_{i=1}^d\operatorname{e}(S_i)&
\end{flalign*}
Since for all $i\in\{1,\ldots,d\}$ we have $\operatorname{m}(S_i)\geq \operatorname{e}(S_i)$, $\operatorname{n}(S_i)\geq 1$, by hypotheses $\operatorname{e}(S_i)\operatorname{n}(S_i)\geq \operatorname{c}(S_i)$, (2) of Remark~\ref{invariants}, we can continue the argument:
\begin{flalign*}
& \operatorname{e}(S)\operatorname{n}(S) \geq &\\
&\geq d \sum_{i=1}^d\operatorname{c}(S_i)+d\sum_{i\neq j}\operatorname{e}(S_i)\operatorname{n}(S_j)+\left(\binom{d}{2}+\binom{d}{3}-d(d-1)\right)-(d^2-d)\sum_{i=1}^d\operatorname{e}(S_i)=&\\
 & = d\operatorname{c}(S)+d(d-1)+ d\left(\operatorname{e}(S_1)\sum_{j\neq 1}^d\operatorname{n}(S_j)+ \cdots +\operatorname{e}(S_d)\sum_{j\neq d}^d\operatorname{n}(S_j) \right)+&\\
 &\left(\binom{d}{2}+\binom{d}{3}-d(d-1)\right)-(d^2-d)\sum_{i=1}^d\operatorname{e}(S_i)\geq&\\
 &\geq d\operatorname{c}(S)+d(d-1)\sum_{i=1}^d\operatorname{e}(S_i)+\binom{d}{2}+\binom{d}{3}-(d^2-d)\sum_{i=1}^d\operatorname{e}(S_i) =&\\
 &=d\operatorname{c}(S)+\binom{d}{2}+\binom{d}{3}>d\operatorname{c}(S).&
\end{flalign*}
\end{proof}

As for $T$-graded GNSs, we can prove:

\begin{proposition}
Let $S$ be the $\mathrm{Axis}(S_1,S_2,\ldots,S_d)$ GNS, then the following hold:
\begin{enumerate}
\item $S$ is quasi-irreducible if and only if $S_i$ is irreducible for all $i\in \{1,\ldots,d\}$.
\item $S$ is quasi-symmetric if and only if $S_i$ is symmetric for all $i\in \{1,\ldots,d\}$.
\end{enumerate}
\label{axisQuasi}
\end{proposition}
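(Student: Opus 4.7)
The plan is to use the characterizations of $\operatorname{PF}(S)$, $\operatorname{FA}(S)$, and $\operatorname{SG}(S)$ for $S=\mathrm{Axis}(S_1,\ldots,S_d)$ and reduce both statements to properties of the $S_i$.

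First I would identify $\operatorname{FA}(S)$ explicitly. Since $\operatorname{H}(S)=\bigcup_{i=1}^d\{h\mathbf{e}_i\mid h\in\operatorname{H}(S_i)\}$ and two gaps lying on distinct coordinate axes are incomparable in the componentwise order $\leq$, the maximal gaps are exactly the maximal gaps on each axis taken separately. Thus
\[
\operatorname{FA}(S)=\{\operatorname{F}(S_1)\mathbf{e}_1,\,\operatorname{F}(S_2)\mathbf{e}_2,\,\ldots,\,\operatorname{F}(S_d)\mathbf{e}_d\},
\]
so in particular $\tau(S)=d$.

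Next I would describe $\operatorname{SG}(S)$. From the previous proposition we already have $\operatorname{PF}(S)=\bigcup_{i=1}^d\{f\mathbf{e}_i\mid f\in\operatorname{PF}(S_i)\}$. If $\mathbf{x}=f\mathbf{e}_i\in\operatorname{PF}(S)$ then $2\mathbf{x}=2f\mathbf{e}_i$, which lies in $S$ if and only if $2f\in S_i$ (because elements on the $i$-th axis belong to $S$ exactly when their scalar is in $S_i$). Hence
\[
\operatorname{SG}(S)=\bigcup_{i=1}^d\{f\mathbf{e}_i\mid f\in\operatorname{SG}(S_i)\}.
\]

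For part (1), I apply Proposition \ref{FA=SG}: $S$ is quasi-irreducible iff $\operatorname{FA}(S)=\operatorname{SG}(S)$. Since gaps on different axes are disjoint, this equality holds iff, for every $i\in\{1,\ldots,d\}$, $\operatorname{SG}(S_i)=\{\operatorname{F}(S_i)\}$, which by \cite[Chapter 3]{rosales2009numerical} is equivalent to $S_i$ being irreducible. For part (2), quasi-symmetry means $\operatorname{FA}(S)=\operatorname{PF}(S)$; by the same axis-by-axis disjointness, this holds iff $\operatorname{PF}(S_i)=\{\operatorname{F}(S_i)\}$ for every $i$, i.e.\ $\operatorname{t}(S_i)=1$, which is the definition of $S_i$ being symmetric.

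There is no real obstacle here: the whole argument rests on the fact that gaps of $S$ live on pairwise incomparable axes, which makes $\operatorname{FA}(S)$, $\operatorname{SG}(S)$, and $\operatorname{PF}(S)$ decompose cleanly as disjoint unions indexed by $i$. The only step requiring a small observation is the identification of $\operatorname{SG}(S)$, which I would write out explicitly as above before quoting Proposition \ref{FA=SG}.
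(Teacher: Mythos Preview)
Your proposal is correct and follows essentially the same route as the paper: compute $\operatorname{FA}(S)=\{\operatorname{F}(S_i)\mathbf{e}_i\mid i\in[d]\}$, invoke Proposition~\ref{FA=SG} for (1), and compare $\operatorname{FA}(S)$ with $\operatorname{PF}(S)$ for (2). Your version is in fact slightly more detailed, since you spell out $\operatorname{SG}(S)=\bigcup_i\{f\mathbf{e}_i\mid f\in\operatorname{SG}(S_i)\}$ explicitly, which the paper leaves implicit; just note in passing that $\tau(S)=\operatorname{t}(S)$ is equivalent to $\operatorname{FA}(S)=\operatorname{PF}(S)$ because $\operatorname{FA}(S)\subseteq\operatorname{PF}(S)$.
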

\begin{proof}
The claim (1) easily follows from Proposition~\ref{FA=SG}, since $\operatorname{SG}(S_i)=\{\operatorname{F}(S_i)\}$ for all $i\in\{1,\ldots,d\}$ if and only if $\operatorname{FA}(S)=\operatorname{SG}(S)=\{\operatorname{F}(S_i)\mathbf{e}_i\mid i\in\{1,\ldots,d\}\}$. The claim (2) follows since $\operatorname{PF}(S_i)=\{\operatorname{F}(S_i)\}$ for all $i\in\{1,\ldots,d\}$ if and only if $\operatorname{FA}(S)=\operatorname{PF}(S)=\{\operatorname{F}(S_i)\mathbf{e}_i\mid i\in\{1,\ldots,d\}\}$.
\end{proof}

\begin{remark}
It is known (\cite{singhal2021frobenius}) that if $S\subseteq \mathbb{N}^d$ is a quasi-irreducible GNS then $\tau(S)\leq \operatorname{t}(S)\leq 2\tau(S)$. We can observe that it is possible to produce a GNS such that $\operatorname{t}(S)$ is any desired value between $\tau(S)$ and $2\tau(S)$. In fact, let $r\in \mathbb{N}$ such that $d\leq d+r\leq 2d$ and consider $S_1,\ldots,S_r$ pseudo-symmetric numerical semigroups (in particular $\operatorname{PF}(S_i)=\{\operatorname{F}(S_i),\operatorname{F}(S_i)/2\}$ for all $i\in \{1,\ldots,r\}$), and $S_{r+1},\ldots, S_d$ symmetric numerical semigroups (in particular $\operatorname{PF}(S_j)=\{\operatorname{F}(S_j)\}$ for all $j\in \{r+1,\ldots,d\}$). So the semigroup $S=\mathrm{Axis}(S_1,S_2,\ldots,S_d)$ is quasi-irreducible with $\tau(S)=d$ and $\operatorname{t}(S)=\tau(S)+r$.
\end{remark}

\section{Concluding remarks}
We conclude with some questions and possible developments arising from this paper:
\begin{itemize}

\item In this paper we consider, for a GNS $S$, the inequality $\operatorname{e}(S)\geq d(\operatorname{t}(S)+1)$ and we provide a class of GNSs whose elements satisfy such an inequality. It could be interesting to find other different classes of GNSs that satisfy the inequality.

\item For a $T$-graded GNS we prove the generalized Wilf's conjecture in the case $T=\langle m,n\rangle$, $n\geq 3$ odd integer. In general it seems very difficult to prove the conjecture for $T=\langle m,n\rangle$ with $m>2$ and $n>m$. So generalized Wilf's conjecture for such a class remains open.

\item  We describe how some invariants and properties of a $T$-stripe and a $T$-graded GNS are related to the invariants and properties of the numerical semigroup $T$. We ask if there are other properties of a $T$-stripe or a $T$-graded GNS related to the properties of the associated numerical semigroup $T$. The same question can be considered for the GNS $\mathrm{Axis}(S_1,S_2,\ldots,S_d)$, with respect to the related numerical semigroups $S_1,\ldots,S_d$. 

\end{itemize}


Finally we mention that in order to prove the generalized Wilf's conjecture for every GNS $\mathrm{Axis}(S_1,S_2,\ldots,S_d)$ and $d\geq 2$, we attempt also to use Corollary~\ref{wilfType}. We observe that if $S$ is the $\mathrm{Axis}(S_1,S_2,\ldots,S_d)$ GNS and if $\operatorname{e}(S_i)+\operatorname{m}(S_i)\geq 2\operatorname{t}(S_i)+2$ for all $i\in \{1,\ldots,d\}$, then $S$ satisfies the generalized Wilf's conjecture. In fact we have that $\operatorname{t}(S_i)\leq \operatorname{m}(S_i)-1$ for all $i\in \{1,\ldots,d\}$. Moreover in such a case  we can suppose $\operatorname{t}(S_j)< \operatorname{m}(S_j)-1$ for some $j$, since if $\operatorname{t}(S_i)= \operatorname{m}(S_i)-1$ for all $i$ then each $S_i$ has maximal embedding dimension and satisfies Wilf's conjecture. So we obtain:
\begin{flalign*}
&\operatorname{e}(S)=\sum_{i=1}^d\operatorname{e}(S_i)+(d-1)\sum_{i=1}^d(\operatorname{m}(S_i)-1)+\binom{d}{2}+\binom{d}{3} =&\\
&= \sum_{i=1}^d(\operatorname{e}(S_i)+\operatorname{m}(S_i)-1)+(d-2)\sum_{i=1}^d(\operatorname{m}(S_i)-1)+\binom{d}{2}+\binom{d}{3}\geq &\\
&\geq \sum_{i=1}^d (2\operatorname{t}(S_i)+1)+(d-2)\sum_{i=1}^d\operatorname{t}(S_i)+\binom{d}{2}+\binom{d}{3}= d\sum_{i=1}^d\operatorname{t}(S_i)+d+\binom{d}{2}+\binom{d}{3}> &\\
&>  d\sum_{i=1}^d\operatorname{t}(S_i)+d=d\left(\sum_{i=1}^d\operatorname{t}(S_i)+1\right)=d(\operatorname{t}(S)+1)&
\end{flalign*}
We have checked by brute force, using the \texttt{GAP} (\cite{GAP}) package \texttt{numericalsgps} (\cite{numericalsgps}), that every numerical semigroup $S$ of genus $g\leq 26$ satisfies $\operatorname{e}(S_i)+\operatorname{m}(S_i)\geq 2\operatorname{t}(S_i)+2$. But this is not true in general. 
In fact, in a personal communication, Shalom Eliahou provided us different numerical semigroups of higher genus not verifying the previous inequality. For instance, if $S=<17, 18, 20, 22, 24, 25>$, then $\operatorname{g}(S)=27$, $m(S)=17$, $e(S)=6$ and $t(S)=11$, hence $m(S)+e(S)-2(t(S)+1)=-1$. He suggested also the following interesting question: \\

\textbf{Question}:\\
Let $m$ be a positive integer and $\cM(m)$ be the set of all numerical semigroups of multiplicity $m$. Set
$$g(m) = \inf_{S\in \cM(m)} \{m+e(S)-2(t(S)+1)\}$$
so $g(m)$ belongs to $\ZZ \cup \{-\infty\}$. From some computational tests, it is verified that $g(m)\leq 0$ for $2\leq m\leq 16$, $g(17)\leq -1$, $g(18)\leq -1$, $g(19)\leq -3$. We do not know if these bounds are sharp.  \\
 We ask if $g(m)$ is always an integer. In such a case, it would be very interesting to determine the behavior of $g(m)$ as a function of $m$.


\medskip

\textbf{Acknowledge} The authors wish to thank Professor Shalom Elihaou for his suggestions related to the last part of this work, and for his nice availability and kindness. They would like to thank also Professor Rosanna Utano for her helpful suggestions and comments.



\bibliographystyle{plain}
\bibliography{stripe}

\end{document}